\documentclass[10pt,a4paper,reqno]{amsart}
\usepackage{epsfig}
\usepackage{color}
\usepackage{amssymb,amsmath,amsthm,amstext,amsfonts}
\usepackage{amssymb,amsmath,amsthm,amstext,amsfonts}
\usepackage{amsmath,amstext,amsthm,amsfonts}
\usepackage{color}
\usepackage[mathscr]{eucal}
\usepackage{dsfont}

\usepackage{hyperref}
\RequirePackage[dvipsnames]{xcolor} 
\definecolor{halfgray}{gray}{0.55} 
\definecolor{webgreen}{rgb}{0,0.5,0}
\definecolor{webbrown}{rgb}{.6,0,0} \hypersetup{%
	colorlinks=true, linktocpage=true, pdfstartpage=3,
	pdfstartview=FitV,%
	breaklinks=true, pdfpagemode=UseNone, pageanchor=true,
	pdfpagemode=UseOutlines,%
	plainpages=false, bookmarksnumbered, bookmarksopen=true,
	bookmarksopenlevel=1,%
	hypertexnames=true,
	pdfhighlight=/O,
	urlcolor=webbrown, linkcolor=RoyalBlue,
	citecolor=webgreen, 
	pdftitle={},%
	pdfauthor={},%
	pdfsubject={2000 MAthematical Subject Classification: Primary:},%
	pdfkeywords={},%
	pdfcreator={pdfLaTeX},%
	pdfproducer={LaTeX with hyperref}%
}


\usepackage{psfrag}
\usepackage{url}
\usepackage{epstopdf}
\usepackage{epic,eepic}
\usepackage{upgreek}
\usepackage{amssymb}
\usepackage{mathrsfs}
\usepackage{verbatim}
\usepackage{mathrsfs}
\usepackage{amstext}
\usepackage{amsthm}
\usepackage{amssymb}
\usepackage{graphicx}

\theoremstyle{plain}
\newtheorem{theorem}{Theorem}[section]

\newtheorem{lemma}[theorem]{Lemma}

\theoremstyle{definition}
\newtheorem{remark}[theorem]{Remark}

\newcommand{\N}{\mathbb{N}}

\newcommand{\R}{\mathbb{R}}

\begin{document}
	
	\title[Metric mean dimension with potential of level sets]{Variational principles for metric mean dimension with potential of level sets}

	\author{Lucas Backes}
	\author{Chunlin Liu\textsuperscript{*}}
	\author{Fagner B. Rodrigues}
	\thanks{\textsuperscript{*}Corresponding author. Email: chunlinliu@mail.ustc.edu.cn}
	\address[Lucas Backes]{\noindent Departamento de Matem\'atica, Universidade Federal do Rio Grande do Sul, Av. Bento Gon\c{c}alves 9500, CEP 91509-900, Porto Alegre, RS, Brazil.}
	\email{lucas.backes@ufrgs.br}
	
	\address[Chunlin Liu]{\noindent School of Mathematical Sciences, Dalian University of Technology, Dalian, 116024, P.R. China}
	\email{chunlinliu@mail.ustc.edu.cn}
	
	\address[Fagner B. Rodrigues]{\noindent Departamento de Matem\'atica, Universidade Federal do Rio Grande do Sul, Av. Bento Gon\c{c}alves 9500, CEP 91509-900, Porto Alegre, RS, Brazil.}
	\email{fagnerbernardini@gmail.com}
	
	\date{\today}

	\keywords{Metric mean dimension with potential; variational principle; level sets; Birkhoff averages}
	
	\subjclass[2020]{Primary: 
		37A35, 
		37B40, 
		37D35; 
		Secondary: 
		37A05, 	
		37B05, 
	}
	

	\begin{abstract}
		We establish three variational principles for the upper metric mean dimension with potential of level sets of continuous maps in terms of the entropy of partitions and Katok's entropy for dynamical systems exhibiting the specification property. 
		Moreover, we apply our results to investigate the metric mean dimension of suspension flows. As a byproduct, we establish certain properties of suspension flows and prove a measure-theoretic metric mean dimension version of Abramov's formula.
	\end{abstract}
	
	\maketitle

	\section{Introduction}
	Given a continuous map $f\colon X\to X$ defined on a compact metric space $(X,d)$, a continuous map $\varphi \colon X\to \mathbb R$ and $\alpha\in \mathbb R$, in this paper, we are interested in studying the size of the \emph{level sets}
	\begin{align*}
		K_\alpha=\left\{x\in X: \lim_{n\to\infty}\frac{1}{n}\sum_{j=0}^{n-1}\varphi(f^{j}(x))=\alpha\right\}
	\end{align*}
	by means of the \emph{upper metric mean dimension with potential}. This type of result is within the scope of \emph{multifractal analysis}.
	In very general terms, the idea of multifractal analysis consists in decomposing the phase space as the union of subsets of points with similar dynamical behavior, for instance, as the union of sets of points with the same Birkhoff average
	\[X=\bigcup_{\alpha\in \mathbb R}K_\alpha \cup I(\varphi)\]
	where $I(\varphi)$ is the set of points for which the Birkhoff average does not exist, and to describe the size of each such subset from a geometrical or topological viewpoint. The information (collection of numbers) obtained via this procedure for one such decomposition of the phase space is
	called a \emph{multifractal spectrum}. Then, the general idea is that if we know some properties of these spectra, we can recover useful information about the dynamics (see \cite{Barreira, BPS,BPS2, Cli, Olsen,Pesin, STVW,TV2}).
	
	In the works of Takens and Verbitskiy \cite{TV} and Thompson \cite{Tho,MR2765447}, the authors explored the spectrum derived from measuring the size of level sets of Birkhoff averages using the topological entropy and topological pressure. However,  there are plenty of systems with infinite topological entropy. For instance, they form a $C^0$-generic set in the space of homeomorphisms of a compact manifold with dimension greater than one \cite{Yano}. In particular, the results of \cite{TV, Tho,MR2765447} may not give useful information for systems in this big set. So, the main objective of the present note was to develop results in the spirit of these works which are better suited for this type systems. For this purpose, we use a generalization of the \emph{metric mean dimension} to study the size of $K_\alpha$.
	
	The metric mean dimension was introduced by Lindenstrauss and Weiss in \cite{LW} as metric-dependent analog of the \emph{mean dimension}, a topological invariant associated to a dynamical system introduced by Gromov \cite{Gromov}.  The mean dimension has several applications, in particular, it plays an important role in the study of embedding problems \cite{GuLinTsu2016,GuQiaoTsu2019,GuTsu2020}.
	Meanwhile, the metric mean dimension presents an upper bound to it, and if one furthers assume that the dynamics exhibits the marker property, then there exists a metric compatible with the topology such that the metric mean dimension with respect to this metric is equal to it \cite{GuLinTsu2016,Lin2000}. Furthermore, the metric mean dimension can be seen as a reparameterized version of the topological entropy. As a consequence, it follows readily from the definition that if the topological entropy of a system is finite then its metric mean dimension is zero while if the metric mean dimension is positive then the topological entropy is infinite. In particular, this is a well suited quantity to study systems with infinite topological entropy.
	
	In \cite{TSU2020}, Tsukamoto introduced the notion of  \emph{metric mean dimension with potential}, which is a generalization of the  metric mean dimension in the same way the topological pressure is a generalization of the topological entropy. 
	In this work we use precisely this notion to measure the size of the level sets $K_\alpha$ and relate them with ergodic aspects of the system $f$. More precisely, we establish three variational principles for this, articulated through the metric entropy of partitions and Katok's entropy, respectively (see Theorem \ref{thm:main1}).  Moreover, we apply these results to study the metric mean dimension  of suspension flows.  We recall that previous connections between the metric mean dimension and ergodic theory were given, for instance, by Lindenstrauss and Tsukamoto \cite{LT}, Velozo and Velozo \cite{VV}, Tsukamoto \cite{TSU2020},  Shi \cite{Shi}, Gutman and \'Spiewak \cite{GS}, Yang, Chen and Zhou \cite{YCZ} and by the first and third authors of the present paper \cite{BR}.

	After this work was completed, we learned that a result similar to our Theorem \ref{thm:main1}, was obtained simultaneously and independently by Zhang, Chen and Zhou \cite{ZCZ}. They established a variational principle for the level sets regarding Bowen/packing metric mean dimension and the metric entropy of partitions. However, we consider different definitions of measure-theoretical metric mean dimension. Additionally, we apply our results to suspension flows, which have attracted significant attention  (see, for example, \cite{GuJin2019,Gu,GuShi2024}). To enable the application of our results to suspension flows, we also derived some properties of suspension flows, such as Theorem \ref{thm:byproduct}. As a byproduct, we obtain a measure-theoretical metric mean dimension version of Abramov's formula (see Remark \ref{rem:Aformula}).

	\section{Preliminaries}
	Let $(X,d)$ be a compact metric space and $f \colon X \to X$ be a continuous map. Given $n\in \mathbb{N}$, we define the \emph{dynamical metric} or the 
	\emph{Bowen metric} $d_n \colon X \times X \, \to \,[0,\infty)$ by
	\[
	d_n(x,z)=\max\,\Big\{d(f^i(x),f^i(z)):0\leq i\leq n-1\Big\}.
	\]
	It is easy to see that $d_n$ is indeed a metric and, moreover, generates the same topology as $d$. Furthermore, given $\varepsilon > 0$, $n \in \mathbb{N}$ and a point $x \in X$, we define the open $(n, \varepsilon)$-ball around $x$ by
	\begin{displaymath}
		\mathit{B}_{n}(x, \varepsilon) = \{ y \in X ; d_n( x, y) < \varepsilon \}.
	\end{displaymath}
	We sometimes call these \emph{$(n, \varepsilon)$-dynamical balls} of radius $\varepsilon$ and length $n$. Given $K\subset X$, we say that $E \subset K$ is an \emph{$(n,\varepsilon)$--separated} subset of $K$ if $d_n(x,z) > \varepsilon$ for every $x,z \in E$ with $x\neq y$. Moreover, we say that a set $F \subset K$ is an \emph{$(n, \varepsilon)$-spanning} set of $K$ if $K \subset \bigcup_{x \in F} B_n(x, \varepsilon)$.

	\subsection{Metric mean dimension with potential}\label{sec: mmdim def}  We now present the notion of Bowen metric mean dimension with potential on non-compact sets introduced in \cite{CHeng}.
	Let $C(X,\mathbb{R})$ denote the set of all continuous maps from $X$ to $\mathbb{R}$. For any potential $\psi \in C(X, \mathbb{R})$, $Z\subset X$, $n\in \N$ and $\varepsilon>0$, define 
	\[
	m(Z,\psi,s, N,\varepsilon)=\inf_{\Gamma}\left\{\sum_{i\in I}\exp{\left(-sn_i+|\log \varepsilon|\sup_{x\in B_{n_i}(x_i,\varepsilon)}S_n \psi (x)\right)}\right\},
	\]
	where the infimum is taken over all covers $\Gamma=\{B_{n_i}(x_i,\varepsilon)\}_{i\in I}$ of $Z$ with $n_i\geq N$ and $S_n \psi (x)=\sum_{j=0}^{n-1}\psi (f^j(x))$.  We also consider
	\[
	m(Z,\psi,s,\varepsilon)=\lim_{N\to\infty}m(Z,\psi,s, N,\varepsilon).
	\]
	One can show (see for instance \cite{Pesin}) that there exists a certain number $s_0\in [0,+\infty)$ such that $m(Z,\psi, s,\varepsilon)=0$ for every $s>s_0$ and $m(Z,\psi, s,\varepsilon)=+\infty$ for every $s<s_0$. In particular, we may consider
	\[
	m\,\Big(Z,f,\psi,\varepsilon\Big)=\inf\{s:m(Z,\psi,s,\varepsilon)=0\}=\sup\{s:m(Z,\psi, s,\varepsilon)=+\infty\}.
	\]
	The \textit{upper Bowen metric mean dimension of $f$ with potential $\psi$ on $Z$} is then defined as the following limit 
	\begin{equation*}
		\overline{\mathrm{mdim}}_{\mathrm{M}}^B\,\Big(Z,f,\psi,d\Big)=\limsup_{\varepsilon\to 0}\frac{m\,\Big(Z,f,\psi,\varepsilon\Big)}{|\log \varepsilon|}.
	\end{equation*}

	We also consider the quantities
	\[
	P_n(d, f, \psi , \varepsilon, Z)=\sup\left\{\sum_{x \in E} \exp \left(|\log \varepsilon| S_n \psi (x)\right):\; E \text{ is an }(n, \varepsilon)\text{-separated subset of }Z\right\},
	\]
	and
	\[
	P(d, f, \psi , \varepsilon, Z)=\limsup _{n \rightarrow \infty} \frac{1}{n} \log P_n(d, f, \psi , \varepsilon, Z) .
	\]
 Then, the \emph{upper capacity metric mean dimension of $Z$ with potential $\psi$} is defined by 
	\[\mathrm{\overline{mdim}_M}\,\Big(Z,f,d,\psi \Big)=\limsup_{\varepsilon \rightarrow 0}\frac{1}{|\log\varepsilon|}	P(d, f, \psi , \varepsilon, Z).\]
	
	Recall that the \emph{topological pressure} of $f$ with respect to the potential $\psi $ on $Z \subset X$ is defined by
	\[
	CP(f, \psi , Z)=\lim _{\varepsilon \rightarrow 0} P(d, f,\psi , \varepsilon, Z).
	\]
	Moreover, whenever $Z=X$, the quantity $CP(f,\psi):=CP(f,\psi,X)$ is simply the topological pressure of $f$ with respect to $\psi $. Furthermore, whenever $\psi  \equiv 0$, $CP(f, 0)$ reduces to the \emph{topological entropy of $f$}. 
	
	Alternatively, the upper capacity metric mean dimension of $Z$ with potential $\psi$ may be defined as follows. Let us consider
	\[
	Q(d, f, \psi , \varepsilon, Z)=\limsup _{n \rightarrow \infty} \frac{1}{n} \log Q_n(d,f, \psi , \varepsilon, Z)
	\]
	where
	\[
	Q_n(d, f, \psi, \varepsilon, Z)=\inf \left\{\sum_{y \in F} \exp \left(|\log \varepsilon| S_n\psi(y)\right):\;
	F\text{ is an }(n, \varepsilon)\text{-spanning subset of }Z\right\}.\]
	Then it follows by \cite[Proposition 2.6]{Wurelative} that 
	\[
	\mathrm{\overline{mdim}_M}\,\Big(Z,f,d,\psi \Big)=\limsup_{\varepsilon \rightarrow 0}\frac{1}{|\log\varepsilon|}	Q(d, f, \psi , \varepsilon, Z).
	\]
	By \cite{Chen} we have that 
	\begin{equation}
		\label{eq:ineq_upper_topo_capa_mdim}\mathrm{\overline{mdim}^B_M}\,\Big(Z,f,d,\psi \Big)\leq\mathrm{\overline{mdim}_M}\,\Big(Z,f,d,\psi \Big).
	\end{equation}
	In the case when $Z$ is compact and $f$-invariant we have the equality.
	
	\subsection{Level sets of a continuous map} Given $\varphi \in C(X,\mathbb{R})$, for $\alpha\in\mathbb R$, let us consider the \emph{level set}
	\begin{align}\label{def-target-set}
		K_\alpha=\left\{x\in X: \lim_{n\to\infty}\frac{1}{n}\sum_{j=0}^{n-1}\varphi(f^{j}(x))=\alpha\right\}.
	\end{align}
	Associated to this level set we also consider
	\begin{displaymath}
		\mathcal L_\varphi=\{\alpha\in\mathbb R: K_\alpha\not=\emptyset\}.
	\end{displaymath}
	It is easy to see that $\mathcal L_\varphi $ is a bounded and non-empty set \cite[Lemma 2.1]{TV}. Moreover, if $f$ satisfies the so called specification property (see Section \ref{sec: specification}) then $\mathcal L_\varphi$ is an interval of $\mathbb R$ and, moreover, $\mathcal L_\varphi=\{\int \varphi d\mu :\mu \in \mathcal{M}_f(X)\}$ where $\mathcal{M}_f(X)$ stands for the set of all $f$-invariant probability measures (see \cite[Lemma 2.5]{Tho}).

	\subsection{A measure-theoretic upper metric mean dimension $H_\delta^K(\mu)$} In order to define a measure-theoretic notion of upper metric mean dimension, we follow the approach in \cite{CPV}.
	Let $\mathcal M^{erg}_f(X)$ denote the set of all ergodic elements in $\mathcal M_f(X)$. Given $\mu\in \mathcal M^{erg}_f(X)$, $\varepsilon>0$ and $\delta\in (0,1)$, let us denote by $N_\mu(\varepsilon,\delta,n)$ the minimal number $(n,\varepsilon)$-balls needed to cover a set of $\mu$-measure bigger than $1-\delta$. That is, 
	\[N_\mu(\varepsilon,\delta,n)=\inf_{A\in \mathcal{B}}\{Q_n(d,f,0,\varepsilon,A):\mu(A)>1-\delta\}\]
	where $\mathcal{B}$ denotes the Borel $\sigma$-algebra of $(X,d)$. Then, we define
	\[h_\mu^K(\varepsilon,\delta)=\limsup_{n\to\infty}\frac{1}{n}\log N_\mu(\varepsilon,\delta,n)\]
	where the superscript ``K'' in $h_\mu^K(\varepsilon,\delta)$ stands for ``Katok'' since this quantity comes from the description of the
	metric entropy given by Katok (see \cite{Kat, Walters}).
	The previous notion can be extended to non-ergodic probability measures in $\mathcal{M}_f(X)$ via integration: given $\mu\in\mathcal{M}_f(X)$, define
	\begin{align}\label{convexity-property}
		h_\mu^K(\varepsilon,\delta)=\int_{\mathcal M^{erg}_f(X)} h_m^K(\varepsilon,\delta)\ d\mathbb P_\mu(m),
	\end{align}
	where $\mu=\int_{\mathcal M^{erg}_f(X)} m\ d\mathbb P_\mu(m)$ is the ergodic decomposition of $\mu$.  Observe that, by the definition,  the
	map $m\mapsto h_m^K(\varepsilon,\delta)$ is measurable and integrable. Consequently, the function
	\begin{align*}
		\mu\in \mathcal M_f(X)\mapsto  h_\mu^K(\varepsilon,\delta)
	\end{align*}
	is also affine.
	
	Now, given $\mu\in \mathcal M_f(X)$ and $\delta\in (0,1)$, we define the map $ H_\delta^K\colon \mathcal M_f(X) \to \mathbb R$ by
	\begin{equation}\label{eq: def metric-metric-mean-dim}
		H_\delta^K(\mu)= \sup_{(\mu_\varepsilon)_\varepsilon\in\mathcal{M}(\mu)}\limsup_{\varepsilon\to0}\frac{ h_{\mu_\varepsilon}^K(\varepsilon,\delta)}{|\log\varepsilon|}
	\end{equation}
	where  $\mathcal{M}(\mu)$ stands for the space of sequences of probability measures in $\mathcal M_f(X)$ which converge to $\mu$ in the weak$^{\ast}$-topology. This quantity was introduced in \cite{CPV}, where in Theorem C, they have proved that 
	\begin{align*}
		\mathrm{\overline{mdim}_M}\,\Big(X,f,d,\psi \Big)=\sup\left\{H_\delta^K(\mu)+\int_X \psi\ d\mu:\mu\in \mathcal M^{erg}_f(X)\right\}.
	\end{align*}

	\begin{remark}\label{rmk_important}
		It is important to notice that $h_\mu^K(\varepsilon,\delta)$ can be defined in terms of $(n,\varepsilon)$-spanning sets. More precisely, 
		\[h_\mu^K(\varepsilon,\delta)=\limsup_{n\to\infty}\frac{1}{n}\log b_\mu(\varepsilon,\delta,n),\]
		where $b_\mu(\varepsilon,\delta,n)$ denotes the minimal cardinality of a $(n,\varepsilon)$-spanning set contained in a set of $\mu$-measure bigger than $1-\delta$. 
	\end{remark}
	
	\subsection{The measure-theoretic quantities $\mathrm{H_\varphi\overline{mdim}_M}\,(f,\alpha,d,\psi)$ and $H(\mu)$} Given $\varphi\in C(X,\mathbb R)$ and $\alpha \in\mathbb R$, let us consider
	\begin{equation*}
		\mathcal M_{f}(X,\varphi,\alpha)=\left\{\mu\in\mathcal{M}_f(X):\int \varphi\;d\mu=\alpha\right\}.
	\end{equation*}
	A simple observation is that $\mathcal M_{f}(X,\varphi,\alpha)\neq \emptyset$ for every $\alpha\in \mathcal{L}_\varphi$ (see \cite[Lemma 4.1]{TV}).
	
	Let $\mu\in\mathcal M_f(X)$. We say that $\xi=\{C_1,\ldots,C_k\}$ is a measurable partition of $X$ if every $C_i$ is a measurable set, $\mu\left(X\setminus\cup_{i=1}^kC_i\right)=0$ and $\mu\left(C_i\cap C_j\right)=0$ for every $i\neq j$. The \emph{entropy} of $\xi$ with respect to $\mu$ is given by
	\[
	H_\mu(\xi)=-\sum_{i=1}^{k}\mu(C_i)\log(\mu(C_i)).
	\]
	Given a measurable partition $\xi$, we consider $\xi^n=\bigvee_{j=0}^{n-1}f^{-j}\mathcal \xi$. Then, the \emph{metric entropy of $(f,\mu)$ with respect to $\xi$} is given by
	\[
	h_\mu(f,\xi)=\lim_{n\to +\infty}\frac{1}{n} H_\mu(\xi^n).
	\] 
	Using this quantity we define 
	\begin{align}\label{eq: def Hmdim} 
		&\mathrm{H_\varphi\overline{mdim}_M}\,(f,\alpha,d,\psi)\\
		\nonumber &=\limsup_{\varepsilon \rightarrow 0}\frac{1}{|\log\varepsilon|}\sup_{\mu\in \mathcal M_{f}(X,\varphi,\alpha)}\inf_{|\xi|<\varepsilon}\left(h_\mu(f, \xi)+ \int \psi|\log\varepsilon| d \mu\right)
	\end{align}
	where $|\xi|$ denotes the diameter of the partition $\xi$ and the infimum is taken over all finite measurable partitions of $X$ satisfying $|\xi|<\varepsilon$.

	Moreover, we may also define the quantity
	\begin{align}
		\label{def:metric;metric}
		H(\mu)= \sup_{(\mu_\varepsilon)_\varepsilon\in\mathcal{M}(\mu)}\limsup_{\varepsilon\to0}\frac{ \inf_{|\xi|<\varepsilon}h_{\mu_\varepsilon}(f,\xi)}{|\log\varepsilon|}.
	\end{align}

	Finally, we recall that the \emph{metric entropy of $(f,\mu)$} is given by
	\[
	h_\mu(f)=\sup_\xi h_\mu(f,\xi)
	\]
	where the supremum is taken over all finite measurable partitions $\xi$ of $X$.

	\subsection{Specification property} \label{sec: specification}
	We say that $f$ satisfies the \emph{specification property} if for every $\epsilon > 0$, there exists an integer $m = m(\epsilon )$ such that for any collection of finite intervals $ I_j = [a_j, b_j ] \subset \mathbb{N}$, $j = 1, \ldots, k $, satisfying $a_{j+1} - b_j \geq m(\epsilon )$ for every $j = 1, \ldots, k-1 $ and any $x_1, \ldots, x_k$ in $X$, there exists a point $x \in X$ such that
	\begin{equation*}
		d(f^{p + a_j}x, f^p x_j) < \epsilon 
	\end{equation*}
	for all $ p = 0, \ldots, b_j - a_j$  and every $ j = 1, \ldots, k.$
	The specification property is present in many interesting examples. For instance, every topologically mixing locally maximal hyperbolic set has the specification property and factors of systems with specification have specification (see for instance \cite{KH}). Other examples of systems satisfying this property that have positive metric mean dimension may be found in \cite[Section IV]{BR}.

	\subsection{Main result}
	The main result of this paper is the following.
	\begin{theorem}\label{thm:main1}
		Suppose $f:X \to X$ is a continuous map with the specification
		property. Let $\varphi \in C(X,\mathbb R)$ and $\alpha\in\mathbb R$ be such that $K_\alpha\not=\emptyset$.
		Then, given $\psi \in C(X,\mathbb R)$, for every $\delta \in (0,1)$ we have that
		\[
		\begin{split}
			\mathrm{\overline{mdim}_M}\,\Big(K_\alpha,f,d,\psi \Big)&=	\mathrm{\overline{mdim}_M^B}\,\Big(K_\alpha,f,d,\psi \Big)\\
			&=\mathrm{H_\varphi\overline{mdim}_M}\,(f,\alpha,d,\psi)\\
			&=\sup\left\{ H(\mu)+\int_X \psi\  d \mu :\mu\in \mathcal M_f(X,\varphi , \alpha)\right\}\\
			&=\sup\left\{H_\delta^K(\mu)+\int_X \psi\ d\mu:\mu\in \mathcal M_f(X,\varphi , \alpha)\right\} .
		\end{split}
		\]
	\end{theorem}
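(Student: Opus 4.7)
The plan is to establish the four-way equality by a multifractal analysis argument adapted to the metric mean dimension with potential, combining Tsukamoto's framework with the Pfister--Sullivan / Thompson approach based on the specification property, as used in \cite{BR} for the potential-free case. Denote the four quantities by
\[ A = \mathrm{\overline{mdim}_M}\bigl(K_\alpha,f,d,\psi\bigr), \qquad B = \mathrm{H_\varphi\overline{mdim}_M}(f,\alpha,d,\psi), \]
\[ C = \sup\{H(\mu)+\textstyle\int\psi\,d\mu\}, \qquad D = \sup\{H_\delta^K(\mu)+\textstyle\int\psi\,d\mu\}. \]
I will establish a chain $D \leq A \leq B$ and comparisons $D \leq C$ and $B \leq D$, which collectively force all four quantities to coincide.

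\textbf{Partition versus Katok comparison ($D \leq C$).} The key observation is that if $\xi$ is a measurable partition with $|\xi|<\varepsilon$, then any atom of $\xi^n = \bigvee_{j=0}^{n-1}f^{-j}\xi$ has $d_n$-diameter less than $\varepsilon$, hence is contained in some $(n,\varepsilon)$-ball. Applying Shannon--McMillan--Breiman to any ergodic $\mu_\varepsilon$, the number of $\xi^n$-atoms required to cover a set of mass $1-\delta$ is asymptotically $\exp(n\, h_{\mu_\varepsilon}(f,\xi))$. This gives $h_{\mu_\varepsilon}^K(\varepsilon,\delta) \leq \inf_{|\xi|<\varepsilon}h_{\mu_\varepsilon}(f,\xi)$, which (together with \eqref{convexity-property} and the affineness of both sides) yields $H_\delta^K(\mu) \leq H(\mu)$ for every $\mu \in \mathcal M_f(X)$, and hence $D \leq C$.

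\textbf{Upper bound $A \leq B$ (empirical measure argument, no specification needed).} Given an $(n,\varepsilon)$-separated set $E \subset K_\alpha$, form the empirical measures $\nu_n = \frac{1}{|E|}\sum_{x\in E}\delta_x$ and their invariant averages $\mu_n = \frac{1}{n}\sum_{j=0}^{n-1}f^j_*\nu_n$. Any weak$^*$-accumulation point $\mu$ of $\{\mu_n\}$ lies in $\mathcal M_f(X,\varphi,\alpha)$ because every $x \in E \subset K_\alpha$ has $S_n\varphi(x)/n$ close to $\alpha$ for $n$ large. A Misiurewicz-type bound on $|E|$ via $h_\mu(f,\xi)$ for any partition $\xi$ of diameter less than $\varepsilon$ whose boundary is $\mu$-null, combined with the weight $\exp(|\log\varepsilon|S_n\psi)$ which on the limit converges to $\exp(|\log\varepsilon|\int\psi\,d\mu)$, gives $P(d,f,\psi,\varepsilon,K_\alpha) \leq \sup_\mu \inf_\xi (h_\mu(f,\xi)+|\log\varepsilon|\int\psi\,d\mu)$. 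Dividing by $|\log\varepsilon|$ and letting $\varepsilon\to 0$ yields $A \leq B$.

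\textbf{Lower bound $D \leq A$ and closing inequality $B \leq D$ (specification construction).} This is the technical heart. Fix ergodic $\mu \in \mathcal M_f(X,\varphi,\alpha)$ and, for $D$, a sequence $\mu_\varepsilon \to \mu$ realizing the sup in the definition of $H_\delta^K(\mu)$. By Remark~\ref{rmk_important}, choose $(n_k,\varepsilon)$-separated subsets $E_k$ of cardinality $\gtrsim\exp(n_k h_{\mu_\varepsilon}^K(\varepsilon,\delta))$ inside sets $A_k$ of $\mu_\varepsilon$-mass greater than $1-\delta$ on which both $S_{n_k}\varphi/n_k$ and $S_{n_k}\psi/n_k$ are uniformly close to $\int\varphi\,d\mu_\varepsilon$ and $\int\psi\,d\mu_\varepsilon$ respectively. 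Using the specification property with gap $m=m(\varepsilon)$, concatenate successive selections from the $E_k$'s into a single orbit; by diagonalizing over a sequence of scales $\varepsilon_k \downarrow 0$ and letting the window lengths $n_k$ grow sufficiently fast, the resulting points have Birkhoff averages of $\varphi$ converging exactly to $\alpha$, placing them in $K_\alpha$. A Moran-type argument shows the concatenated set is essentially $(N,\varepsilon/2)$-separated up to a correction from the gaps, yielding
\[ P\bigl(d,f,\psi,\varepsilon/2,K_\alpha\bigr) \ \geq\ h_{\mu_\varepsilon}^K(\varepsilon,\delta) \,+\, |\log\varepsilon|\textstyle\int\psi\,d\mu_\varepsilon\,-\,o(1). \]
Dividing by $|\log(\varepsilon/2)|$, letting $\varepsilon\to 0$, and then taking sup over $\mu$, gives $A \geq D$; the non-ergodic case follows from \eqref{convexity-property} and affineness. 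Running the same construction with $(n_k,\varepsilon)$-separated sets of cardinality $\exp(n_k\, h_{\mu}(f,\xi_\varepsilon))$, where $\xi_\varepsilon$ is a near-infimizer in $\inf_{|\xi|<\varepsilon}h_\mu(f,\xi)$, combined with the opposite direction obtained by swapping the roles in the comparison of Step 1, yields $B \leq D$ and closes the chain.

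\textbf{Main obstacle.} The critical difficulty is the three-way balancing act in the specification-based construction: we must (i) preserve exponential $(n,\varepsilon)$-separation while concatenating Katok-selected segments, (ii) ensure that the arising orbits actually belong to $K_\alpha$, which forces the window lengths $n_k$ to grow fast enough that the finite gap contributions are asymptotically negligible, and (iii) simultaneously control the $\psi$-Birkhoff averages so that the weight $\exp(|\log\varepsilon|S_n\psi)$ behaves like $\exp(|\log\varepsilon|\int\psi\,d\mu)$. Additionally, since the sup in $H(\mu)$ and $H_\delta^K(\mu)$ runs over weak$^*$-sequences $\mu_\varepsilon \to \mu$ which need not satisfy $\int\varphi\,d\mu_\varepsilon = \alpha$, an extra weak$^*$-continuity argument is needed to ensure the constructed orbit still lies in $K_\alpha$ despite the drift between $\int\varphi\,d\mu_\varepsilon$ and $\alpha$ being only $o(1)$ and not $o(1/|\log\varepsilon|)$.
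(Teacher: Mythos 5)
Your proposed upper bound $A\leq B$ is where the argument genuinely breaks. You claim it needs no specification because ``every $x\in E\subset K_\alpha$ has $S_n\varphi(x)/n$ close to $\alpha$ for $n$ large''; but membership in $K_\alpha$ gives only pointwise convergence of Birkhoff averages, with no uniformity over the varying maximizing sets $E_n$. For a fixed large $n$, an $(n,\varepsilon)$-separated subset of $K_\alpha$ realizing $P_n(d,f,\psi_\varepsilon,\varepsilon,K_\alpha)$ may consist entirely of points that equilibrate only after time much larger than $n$ (think of a full shift: points arbitrary on the first $n$ coordinates and ``$\alpha$-typical'' afterwards lie in $K_\alpha$), so the accumulation point of your empirical measures need not satisfy $\int\varphi\,d\mu=\alpha$, and the bound by $\sup_{\mu\in\mathcal M_f(X,\varphi,\alpha)}$ does not follow. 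This is exactly why the paper proves Lemma \ref{lem1} first: using specification it replaces $K_\alpha$ by a subset $Z\subset K_\alpha$ on which the averages converge \emph{uniformly} at controlled rates (inequality \eqref{eq3}) while losing at most $4\gamma$ in the spanning quantity $Q$, and only then runs the Walters-type empirical-measure argument on separated sets of $Z$. Since the capacity quantity $\mathrm{\overline{mdim}_M}$ is not countably stable, there is no way to bypass this reduction by decomposing $K_\alpha$ into the sets $K(\alpha,n,\delta)$ either; specification enters the upper bound in an essential way, and your proposal omits this idea entirely.

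There are further gaps in the lower-bound/closing part. First, you run the specification gluing only for \emph{ergodic} $\mu\in\mathcal M_f(X,\varphi,\alpha)$ and dismiss the general case with ``affineness and \eqref{convexity-property}''; this does not work, because the extreme points of the constrained set $\mathcal M_f(X,\varphi,\alpha)$ need not be ergodic, and the ergodic components of a constrained measure need not satisfy the constraint. The paper handles this by Lemma \ref{lem: approx}: approximate $\mu$ by a finite convex combination $\sum_i\lambda_i\nu_i^k$ of ergodic measures (individually violating the constraint) and glue orbit segments drawn from the several $\nu_i^k$ in proportions $\lambda_i$, so the concatenated orbits still have average $\alpha$; this mixing step is absent from your sketch. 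Second, you yourself flag but do not resolve the drift problem that the approximating measures $\mu_\varepsilon$ in $H(\mu)$, $H^K_\delta(\mu)$ need not satisfy $\int\varphi\,d\mu_\varepsilon=\alpha$, which is precisely what must be fixed (again via the approximation lemma) for the constructed fractal set to lie in $K_\alpha$. Third, your announced chain $D\le A\le B\le D$ together with $D\le C$ never bounds $C$ from above, so it does not force all four quantities to coincide; you need the reverse Shi-type comparison $\inf_{|\xi|<\varepsilon}h_\mu(f,\xi)\le h^K_\mu(\varepsilon/4,\delta)$ (which the paper uses in Section 3.3) made explicit, and your route to $B\le D$ must also deal with the quantifier order in \eqref{eq: def Hmdim}, where the supremum over $\mu$ sits inside the $\limsup_{\varepsilon\to0}$, by extracting a convergent sequence of near-optimal measures $\mu^{(\varepsilon)}$ and feeding it into the definition of $H^K_\delta$ of its limit. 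As written, the proposal does not constitute a proof.
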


	The paper is organized as follows. In Section 3, we prove Theorem \ref{thm:main1}. More specifically, Sections 3.1 and 3.2 are dedicated to proving the first two equations. Section 3.3 focuses on proving the last equation. In Section 4, we apply these results in the context of suspension flows.

	\section{Proof of Theorem \ref{thm:main1}}
	
	In this section we present the proof of Theorem \ref{thm:main1} which is based on ideas developed in \cite{BR,TV, Tho,MR2765447}. In order to simplify notation, we will denote the function $|\log\varepsilon|\psi $ simply by $\psi_\varepsilon$ for any $\varepsilon>0$ and $\psi\in C(X,\mathbb{R})$. 
	
	\subsection{Upper bounds for $\mathrm{\overline{mdim}_M^B}\,\Big(K_\alpha,f,d,\psi \Big)$}
	We start with an auxiliary lemma which is a simple adaptation of \cite[Lemma 3.2]{Tho} to our setting. We include its proof for the sake of completeness. Moreover, it helps to clarify the role of the specification property in the proof of Theorem \ref{thm:main1}.
	
	\begin{lemma}\label{lem1}
		Suppose that $f\colon X\to X$ has the specification property. Then, given $\gamma>0$, for any $\varepsilon>0$, there exist $Z \subset K_\alpha$, $t_k \rightarrow \infty$ and $\varepsilon_k \rightarrow 0$ such that, if $p \in Z$, then
		\begin{equation}\label{eq3}
			\left|\frac{1}{m} S_m \varphi(p)-\alpha\right| \leqslant \varepsilon_k \quad \text { for all } m \geqslant t_k
		\end{equation}
		and, moreover,
		\[	Q(d, f, \psi_{\varepsilon} , \varepsilon, Z) \ge Q(d, f, \psi_{2\varepsilon} , 2\varepsilon, K_\alpha)-4 \gamma.\]
	\end{lemma}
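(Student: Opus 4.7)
We adapt Thompson's strategy from \cite[Lemma 3.2]{Tho} to the weighted $Q$-setting. Fix $\gamma>0$ and $\varepsilon>0$, and set $s:=Q(d,f,\psi_{2\varepsilon},2\varepsilon,K_\alpha)$. By uniform continuity of $\varphi$ and $\psi$, we may assume that on every $(n,\varepsilon)$-dynamical ball both $S_n\varphi$ and $S_n\psi_\varepsilon$ vary by at most $n\gamma$, which also absorbs the scale-change from $\psi_{2\varepsilon}$ to $\psi_\varepsilon$. Choose any $\varepsilon_k\downarrow 0$ and introduce the closed, $t$-monotone sets
\[
Y_t^{\delta}=\{x\in X:|S_m\varphi(x)/m-\alpha|\leq\delta\text{ for every }m\geq t\},
\]
which satisfy $K_\alpha\subseteq\bigcup_t Y_t^{\delta}$ for every $\delta>0$. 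From the definition of $s$, extract a subsequence $n_k\uparrow\infty$ and, for each $k$, a maximal $(n_k,2\varepsilon)$-separated set $E_{n_k}\subseteq K_\alpha$ whose (spanning) weighted sum satisfies $\sum_{x\in E_{n_k}}\exp(S_{n_k}\psi_{2\varepsilon}(x))\geq e^{n_k(s-\gamma)}$. Since each $E_{n_k}$ is finite, we may pick $t_k\uparrow\infty$ with $E_{n_k}\subseteq Y_{t_k}^{\varepsilon_k}$.

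The core of the argument is a specification-based gluing producing a single $Z\subseteq K_\alpha$ in which every point obeys (\ref{eq3}) with the common sequences $(t_k,\varepsilon_k)$ across all $k$. Let $m(\varepsilon)$ denote the gap length provided by specification. For each $k$, each $x\in E_{n_k}$, and a chosen tail $(y^{j})_{j>k}$ with $y^{j}\in E_{n_j}$, specification yields a point $p=p(x;\{y^{j}\})\in X$ whose orbit $\varepsilon$-shadows $x$ on $[0,n_k)$ and, after gaps of length $m(\varepsilon)$, $\varepsilon$-shadows each $y^{j}$ on successive blocks of length $n_j$. Thinning the subsequence so that the $n_j$ dominate the accumulated gap lengths, one verifies that the Birkhoff averages of $p$ inherit their convergence to $\alpha$ from the inclusions $E_{n_j}\subseteq Y_{t_j}^{\varepsilon_j}$; after mildly enlarging $(\varepsilon_k,t_k)$ to absorb shadowing oscillation and boundary-gap contributions, $p\in K_\alpha$ satisfies (\ref{eq3}) for every $k$. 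Let $Z$ be the collection of all such $p$'s. Varying the initial coordinate $x\in E_{n_k}$ while fixing the tail, the $(n_k,2\varepsilon)$-separation of $E_{n_k}$ combined with $\varepsilon$-shadowing produces an $(n_k,\varepsilon)$-separated subset $\widetilde E_{n_k}\subseteq Z$ in bijection with $E_{n_k}$.

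Any $(n_k,\varepsilon)$-spanning set of $Z$ must contain, for each $\widetilde x\in\widetilde E_{n_k}$, a distinct point $(n_k,\varepsilon)$-close to $\widetilde x$; applying the uniform-continuity estimates from the setup to compare Birkhoff sums across matched pairs gives
\[
Q_{n_k}(d,f,\psi_\varepsilon,\varepsilon,Z)\geq e^{n_k(s-4\gamma)},
\]
where the $4\gamma$ absorbs (i) the $\gamma$ in the choice of $E_{n_k}$, (ii) the oscillation of $\psi_\varepsilon$ on $\varepsilon$-balls, (iii) the scale change $\psi_{2\varepsilon}\rightarrow\psi_\varepsilon$, and (iv) the specification/shadowing error. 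Taking $\frac{1}{n_k}\log$ and $\limsup_k$ yields $Q(d,f,\psi_\varepsilon,\varepsilon,Z)\geq s-4\gamma$, as required. The main obstacle is the gluing step: one must coordinate the block sizes $n_j$, the tail sequences $\{y^{j}\}$, and the gap placements so that simultaneously (a) every $p\in Z$ lies in $K_\alpha$ with the uniform rate $(t_k,\varepsilon_k)$ for all $k$, not only for the $k$ whose $E_{n_k}$ contains $p$'s first block, and (b) the separation inherited from $E_{n_k}$ is preserved in $\widetilde E_{n_k}$ for each $k$.
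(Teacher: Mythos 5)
Your overall strategy (uniform-rate subsets of $K_\alpha$, specification gluing, then a counting comparison) is the same Thompson-style route the paper takes, but the two steps where your version departs from it are exactly where it breaks. First, the counting step: to force an arbitrary $(n_k,\varepsilon)$-spanning set of $Z$ to contain a distinct point near each $\widetilde x\in\widetilde E_{n_k}$, you need $\widetilde E_{n_k}$ to be $(n_k,2\varepsilon)$-separated; but shadowing degrades the $(n_k,2\varepsilon)$-separation of $E_{n_k}$ to something strictly below $2\varepsilon$ (with the $\varepsilon$-shadowing you allow, essentially no separation survives), so two shadow points can share a spanning point and the injection fails. You cannot repair this by starting from a separated set at a larger scale, because a maximal $(n,2\varepsilon')$-separated set with $2\varepsilon'>2\varepsilon$ only dominates $Q_n$ at scale $2\varepsilon'$, not the quantity $Q(d,f,\psi_{2\varepsilon},2\varepsilon,K_\alpha)$ appearing in the statement. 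The paper avoids separated sets altogether: it glues over \emph{all} sequences drawn from the nested sets $K(\alpha,M_k,\delta_k)$, so that every point of $K_k$ is $\varepsilon/2^k$-shadowed on the $k$-th block by some point of $Z$; then any $(t_k,\varepsilon)$-spanning set $F_k$ of $Z$, pushed forward by $f^{t_{k-1}+m_k}$, is automatically $(N_k,2\varepsilon)$-spanning for $K_k$, so its weighted sum is at least $Q_{N_k}(d,f,\psi_{2\varepsilon},2\varepsilon,K_k)$. There the shadowing error only enlarges the spanning radius from $\varepsilon$ to $2\varepsilon$, which is precisely the direction one can afford; your version needs the error to shrink a separation gap, which one cannot afford. (Relatedly, your claim that uniform continuity ``absorbs the scale-change from $\psi_{2\varepsilon}$ to $\psi_\varepsilon$'' is not correct: $S_n\psi_\varepsilon-S_n\psi_{2\varepsilon}=(\log 2)\,S_n\psi$, which is of order $n\|\psi\|$, not $n\gamma$.)

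Second, the uniform rate \eqref{eq3}: your $Z$ contains points whose first shadowed block sits at an arbitrarily high level $k$, and for such a point nothing controls $\frac1m S_m\varphi$ for $t_j\le m<t_k$ with $j<k$, since points of $E_{n_k}$ are only known to have rate $(t_k,\varepsilon_k)$. Hence for each fixed $j$ there are points of $Z$ violating the $\varepsilon_j$-bound at times just above $t_j$ by an amount comparable to $\|\varphi\|$, and no sequence $\varepsilon_j\to 0$ can absorb this; ``mildly enlarging $(\varepsilon_k,t_k)$'' does not help. This is the coordination problem you yourself flag as the main obstacle, and it is left unresolved. The paper resolves it structurally: every point of $Z$ shadows, in order, one point of each $K_1,K_2,\dots$ on blocks of lengths $N_k$ chosen so that $N_{k+1}$ dominates $M_{k+1}$, $m_{k+1}$ and the total length of all earlier blocks, which yields \eqref{eq3} with an explicit $\varepsilon_k$. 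As it stands, then, your proposal has genuine gaps at both the construction of $Z$ and the key counting inequality.
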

	\begin{proof}
		Fix $\varepsilon>0$. Given $\delta>0$, let us consider the set
		\[
		K(\alpha, n, \delta)=\left\{x \in K_\alpha:\left|\frac{1}{m} S_m \varphi(x)-\alpha\right| \leqslant \delta \text { for all } m \geqslant n\right\} .
		\]
		Then, we have that $K_\alpha=\bigcup_{n=1}^\infty K(\alpha, n, \delta)$ and $K(\alpha, n, \delta) \subset K(\alpha, n+1, \delta)$. Consequently, 
		\[Q(d, f, \psi_{2\varepsilon} , 2\varepsilon, K_\alpha)=\lim _{n \to \infty} Q(d, f, \psi_{\varepsilon} , 2\varepsilon, K(\alpha, n, \delta)).\]
		Fix an arbitrary sequence $\delta_k \rightarrow 0$. Then, for each $k\in \N$ there exists $M_k \in \mathbb{N}$ so that
		\[
		Q(d, f, \psi_{2\varepsilon} , 2\varepsilon, K\left(\alpha, M_k, \delta_k\right))\ge Q(d, f, \psi_{2\varepsilon} , 2\varepsilon, K_\alpha)-\gamma .
		\]
		
		Let us write $K_k:=K\left(\alpha, M_k, \delta_k\right)$ for each $k\in\mathbb{N}$ and let $m_k=m\left(\varepsilon / 2^k\right)$ be given by the specification property associated to $\varepsilon/2^k$. Moreover, take a sequence of natural numbers $N_k \rightarrow \infty$ increasing sufficiently fast so that
		\begin{align}
			N_{k+1}>\max \left\{e^{\sum_{i=1}^k\left(N_i+m_i\right)}, e^ {M_{k+1}}, e^ {m_{k+1}}\right\} \label{eq1}
		\end{align}
		and
		\begin{align*}
			Q_{N_k}(d, f, \psi_{2\varepsilon}, 2\varepsilon, K_k)>e^{N_k\left(Q(d, f, \psi_{2\varepsilon} , 2\varepsilon, K_\alpha)-3 \gamma\right)} .
		\end{align*}

		Let $(t_k)_{k\in \N}$ be the sequence defined recursively as $t_1=N_1$ and $t_k=t_{k-1}+m_k+N_k$ for $k \geqslant 2$. Then, by \eqref{eq1}, we have that $t_k / N_k \rightarrow 1$ and $t_{k-1} / t_k \rightarrow 0$, as $k\to\infty$.
		Our objective now is to construct the set $Z$ with the desired properties. For this purpose, for each $i\in\mathbb{N}$, fix $x_i \in K_i$. Then, using the specification property we can build a sequence of points $z_1, z_2, \ldots, z_k, \ldots$ in the following way: let $z_1=x_1$ and choose $z_2$ to satisfy
		\[
		d_{N_1}\left(z_2, z_1\right)<\varepsilon / 4 \quad \text{ and } \quad d_{N_2}\left(f^{N_1+m_2}(z_2), x_2\right)<\varepsilon / 4.
		\]
		For $k\geq 3$ take $z_k$ satisfying
		\[
		d_{t_{k-1}}\left(z_{k-1}, z_k\right)<\varepsilon / 2^{k} \quad \text { and } \quad d_{N_k}\left(f^{t_{k-1}+m_k} (z_k), x_k\right)<\varepsilon / 2^{k} .
		\]
		
		From the properties of the sequence $(z_k)_{k\in \N}$ we have that $\overline{B}_{t_{k+1}} \left(z_{k+1}, \varepsilon / 2^{k}\right)\subset \overline{B}_{t_k}\left(z_k, \varepsilon / 2^{k-1}\right)$ and, consequently, the point $p:=\bigcap_{k=1}^{\infty} \overline{B}_{t_k}\left(z_k, \varepsilon / 2^{k+1}\right)$ is well defined. We define $Z$ to be the set of all points $p$ constructed in this way and show that it has the desired properties.
		
		We start observing that for any $p \in Z$, there exists $x_k \in K_k$ so that $$d_{N_k}\left(f^{t_{k-1}+m_{k}} (p), x_k\right)<\varepsilon / 2^{k-1}.$$ Consequently,
		\[
		S_{t_k} \varphi(p) \leqslant S_{N_k} \varphi\left(x_k\right)+N_k \operatorname{Var}\left(\varphi, \varepsilon / 2^{k-1}\right)+(t_{k-1}+m_{k})\|\varphi\|
		\]
		where 
		\[\operatorname{Var}(\varphi,\varepsilon)=\sup \{ |\varphi(x)-\varphi(y)|: \; d(x,y)<\varepsilon \} \text{ and } \|\varphi \|=\sup_{x\in X}|\varphi(x)|.\] 
		Therefore, we can find a sequence $\varepsilon_k^{\prime} \rightarrow 0$ such that, for any $p \in Z$, we have
		\[
		\left|\frac{1}{t_k} S_{t_k} \varphi(p)-\alpha\right|<\varepsilon_k^{\prime} .
		\]

		Let us now consider $n\in \N$ such that $t_k< n<t_{k+1}$. Then, if $n-t_k+m_{k+1} \ge M_{k+1}$, there exists $x_{k+1} \in K_{k+1}$ such that $d_{N_{k+1}}\left(f^{t_k+m_{k+1}}(p), x_{k+1}\right)<\varepsilon / 2^{k+1}$, and thus 
		\[
		S_n \varphi(p) \leqslant t_k\left(\alpha+\varepsilon_k^{\prime}\right)+\left(n-t_k\right)\left(\alpha+\delta_{k+1}+\operatorname{Var}\left(\varphi, \varepsilon / 2^{k+1}\right)\right)+m_{k+1}\|\varphi\| .
		\]
		Now, if $n-t_k \leqslant M_{k+1}$, then
		\begin{align*}
			\frac{1}{n} S_n \varphi(p) \le \frac{t_k}{n}\left(\alpha+\varepsilon_k^{\prime}\right)+\frac{n-t_k}{n}\|\varphi\| \le \alpha+\varepsilon_k^{\prime}+\frac{M_{k+1}}{N_k}\|\varphi\| .
		\end{align*}
		Consequently, taking 
		\[\varepsilon_k=\max \left\{\varepsilon_k^{\prime}, \delta_{k+1}+\operatorname{Var}\left(\varphi, \varepsilon / 2^{k+1}\right)\right\}+\max \left\{M_{k+1} / N_k, m_{k+1} / N_k\right\}\|\varphi\|\]
		it follows that \eqref{eq3} holds. 
		
		Let us now show that the second claim in the lemma also holds. Take a $\left(t_k, \varepsilon\right)$-spanning set $F_k$ of $Z$ satisfying 
		\begin{equation}\label{eq: l1 aux}
			\sum_{x \in F_k} \exp \left( S_{t_k} \psi_{\varepsilon}(x)\right) =Q_{t_k}(d, f, \psi_{\varepsilon}, \varepsilon, Z).
		\end{equation}
		Then, $f^{t_{k-1}+m_k} (F_k)$ is a $\left(N_k, \varepsilon\right)$-spanning set for $f^{t_{k-1}+m_k} (Z)$. Since for any $x\in K_k$, there exists $z\in f^{t_{k-1}+m}(Z)$ such that $d_{N_k}(x,z)<\varepsilon/2^k$, it follows that $f^{t_{k-1}+m_k} (F_k)$ is an $\left(N_k, 2 \varepsilon\right)$-spanning set for $K_k$. Consequently,
		\begin{align*}
			\sum_{x \in F_k} \exp \left( S_{N_k} \psi_{2\varepsilon}(f^{t_{k-1}+m_k}( x)) \right) & \ge	Q_{N_k}(d, f, \psi_{2\varepsilon} , 2\varepsilon, K_k)\\
			&>\exp \left( N_k(Q(d, f, \psi_{2\varepsilon} , 2\varepsilon, K_\alpha)-3 \gamma)\right).
		\end{align*}
		Thus, for $k$ large enough,
		\begin{align}
			\sum_{x \in F_k} \exp \left( S_{t_k} \psi_{2\varepsilon}(x)\right) & \ge \exp\left( N_k(Q(d, f,\psi_{2\varepsilon} , 2\varepsilon, K_\alpha)-3 \gamma)+\left(t_{k-1}+m_k\right) \inf \psi\right)  \nonumber \\
			& \geqslant \exp \left(t_k(Q(d, f, \psi_{2\varepsilon} , 2\varepsilon, K_\alpha)-4 \gamma)\right) . \label{eq: l1 final}
		\end{align}
		Finally, taking the ``$\liminf$'' of the sequence $t_k^{-1} \log \left( Q_{t_k}(d, f, \psi_ {\varepsilon} , \varepsilon, Z) \right)$ it follows from \eqref{eq: l1 aux} and \eqref{eq: l1 final} that
		\[
		Q(d, f, \psi_{\varepsilon}  ,  \varepsilon, Z)>Q(d, f, \psi_{2\varepsilon} , 2\varepsilon, K_\alpha)-4 \gamma
		\]
		as claimed.
	\end{proof}

 We are now in position to provide the upper bound for $\mathrm{\overline{mdim}_M}\,\Big(K_\alpha,f,d,\psi \Big)$, which, by \eqref{eq:ineq_upper_topo_capa_mdim}, gives an upper bound for $\mathrm{\overline{mdim}_M^B}\,\Big(K_\alpha,f,d,\psi \Big)$. The idea is to construct some special measures that live on some $(n,\varepsilon)$-separated sets of the set $Z$ given by Lemma \ref{lem1} for which the measure-theoretic metric mean dimension is ``high''. We follow the approach of \cite[Theorem 9.10]{Walters} and \cite{Tho}.
	
	Given any $\varepsilon>0$, let $E_n$ be an $(n, \varepsilon)$-separated set of $Z$ satisfying
	\[
	\sum_{x \in E_n} \exp \left( S_n \psi_{\varepsilon}(x)\right)=	P_n(d, f, \psi_{\varepsilon} , \varepsilon, Z)
	\]
	and set $P_{n}:=P_n(d, f, \psi_{\varepsilon} , \varepsilon, Z)$. Let $\sigma_{n}$ be the probability measure on $X$ given by
	\[
	\sigma_n=\frac{1}{P_n} \sum_{x \in E_n} \exp \left( S_n \psi_{\varepsilon}(x)\right) \delta_x 
	\]
	and consider
	\[
	\mu_n=\frac{1}{n} \sum_{i=0}^{n-1} \sigma_n \circ f^{-i} .
	\]
	Fix a subsequence $\{n_j\}_{j=1}^\infty$ of $\mathbb{N}$ such that 
	\[P(d, f, \psi_{\varepsilon} , \varepsilon, Z) =\lim  _{j \rightarrow \infty} \frac{1}{n_j} \log P_{n_j}.\]
	Then, passing to a subsequence of $\{n_j\}_{j=1}^\infty$, if necessary, we may assume that $\mu_{n_j}$ converges. Denote by $\mu$ be the limiting measure of this sequence. Then it is straightforward to check that $\mu \in \mathcal{M}_f(X)$. Moreover, $\int \varphi d \mu=\alpha$. Indeed, given any $n \in \mathbb{N}$, let $k$ be the unique number such that $t_k \leqslant n<t_{k+1}$. Then, using Lemma \ref{lem1}, we have that
	\begin{align*}
		\int \varphi d \mu_n & =\frac{1}{P_n} \frac{1}{n} \sum_{x \in E_n} S_n \varphi(x) e^{S_n \psi_{\varepsilon}(x)} \\
		& \leqslant \frac{1}{P_n} \frac{1}{n} \sum_{x \in E_n} n\left(\alpha+\varepsilon_k\right) e^{S_n \psi_{\varepsilon}(x)} \\
		& =\alpha+\varepsilon_k.
	\end{align*}
	Similarly, we can prove $	\int \varphi d \mu_n \ge\alpha-\varepsilon_k$. Thus, $\int \varphi d \mu=\alpha$ as claimed and $\mu \in \mathcal M_f(X,\varphi , \alpha)$.

	Let us consider now a finite measurable partition $\xi$ of $X$ such that $|\xi|<\varepsilon$ and $\mu(\partial \xi)=0$ where $\partial \xi$ stands for the boundary of the partition $\xi$ which is just the union of the boundaries of all the elements of the partition. Then by an argument similar to that of \cite[Theorem 9.10]{Walters}, we have
	\[
	H_{\sigma_n}\left(\bigvee_{i=0}^{n-1} f^{-i} \xi\right)+\int S_n  \psi_{\varepsilon} d \sigma_n=\log P_n.
	\]
	Moreover, since $\mu(\partial \xi)=0$, for any $q \in \mathbb{N}$,
	\[
	\lim _{j \rightarrow \infty} H_{\mu_{n_j}}\left(\bigvee_{i=0}^{q-1} f^{-i} \xi\right)=H_\mu\left(\bigvee_{i=0}^{q-1} f^{-i} \xi\right) .
	\]
	Now, for any $q,n\in\mathbb{N}$ with $0<q<n-1$, we have
	\[
	\frac{q}{n} \log P_n \leq H_{\mu_n}\left(\bigvee_{i=0}^{q-1} f^{-i} \xi\right)+q \int \psi_{\varepsilon} d \mu_n+2 \frac{q^2}{n} \log \# \xi 
	\]
	where $\# \xi$ denotes the number of elements of the partition $\xi$ (check the proof of \cite[Theorem 9.10]{Walters} for the details in this calculations).
	Thus, replacing $n$ by $n_j$ and letting $j\to\infty$, we obtain
	\[
	q P(d, f, \psi_{\varepsilon} , \varepsilon, Z) \le  H_\mu\left(\bigvee_{i=0}^{q-1} f^{-i} \xi\right)+q \int \psi_{\varepsilon} d \mu .
	\]
	Finally, dividing everything by $q$ and letting $q \rightarrow \infty$, we get that
	\[
	P(d, f, \psi_{\varepsilon} , \varepsilon, Z) \le h_\mu(f, \xi)+ \int \psi_{\varepsilon} d \mu .
	\]
	Thus, since $\xi$ is an arbitrary finite measurable partition with $\mu(\partial \xi)=0$ and $|\xi|<\varepsilon$, it follows that 
	\[
	P(d, f, \psi_{\varepsilon} , \varepsilon, Z) \le \inf_{\mu(\partial \xi)=0,\; |\xi|<\varepsilon}\left( h_\mu(f, \xi)+ \int \psi_{\varepsilon} d \mu \right).
	\]
Then, using \cite[Lemma 2.2]{Wurelative} and   \cite[Proposition 2.5]{yuan2024variational}, we get that 
		\begin{align*}
			Q(d, f, \psi_{\varepsilon} , \varepsilon, Z) \leq P(d, f, \psi_{\varepsilon} , \varepsilon, Z) \le \inf_{|\xi|<\varepsilon} \left( h_\mu(f, \xi)+ \int \psi_{\varepsilon} d \mu \right).
		\end{align*}
		Consequently, by Lemma \ref{lem1},
		\begin{align}
			\label{eq:fundamental}
			Q(d, f, \psi_{2\varepsilon} , 2 \varepsilon, K_\alpha)-4\gamma\leq \inf_{\mu(\partial \xi)=0,\; |\xi|<\varepsilon}\left( h_\mu(f, \xi)+ \int \psi_{\varepsilon} d \mu \right).
		\end{align}

		Finally, by \eqref{eq:fundamental} we get
		\begin{align}
			\mathrm{\overline{mdim}_M}\,\Big(K_\alpha,f,d,\psi \Big)
			= &\limsup_{\varepsilon \rightarrow 0}\frac{1}{|\log 2\varepsilon|}	Q(d, f, \psi_{2\varepsilon} , 2 \varepsilon, K_\alpha)   \label{eq: lower b1 aux}\\
			\le &\limsup_{\varepsilon \rightarrow 0}\frac{1}{|\log\varepsilon|}\sup_{\nu\in \mathcal M_f(X,\varphi , \alpha)}\inf_{|\xi|<\varepsilon} \left( h_\nu(f, \xi)+ \int \psi_{\varepsilon}  d \nu+4\gamma\right)\nonumber\\
			= &\mathrm{H_\varphi\overline{mdim}_M}\,(f,\alpha,d,\psi)\nonumber
		\end{align}
		which is one of the upper bounds for $\mathrm{\overline{mdim}_M}\,\Big(K_\alpha,f,d,\psi \Big)$ that we were looking for.
		
		For the remaining upper bound we notice that the probability measure $\mu$ constructed as in \eqref{eq:fundamental} depends on $\varepsilon>0$ fixed. In particular, we have a family $(\mu_\varepsilon)_\varepsilon$ which satisfies  
		
		\begin{align}\label{eq:fundamental2}
			\frac{Q(d, f, \psi_{2\varepsilon} , 2 \varepsilon, K_\alpha)-4\gamma}{|\log\varepsilon|} 
			\le\frac{\inf_{|\xi|<\varepsilon} \left( h_{\mu_\varepsilon}(f, \xi)+ \int_X \psi_{\varepsilon}  d \mu_\varepsilon \right)}{|\log\varepsilon|},
		\end{align}
		for every $\varepsilon>0$. Let $\bar\mu\in \mathcal{M}_f(X)$ be an accumulation point of $(\mu_\varepsilon)_\varepsilon$. It is an immediate consequence of the definition of the weak$^\ast$ topology that $\bar\mu \in \mathcal M_f(X,\varphi , \alpha)$.
		Hence, using \eqref{eq:fundamental2}  and recalling the definition of $H(\bar\mu)$ we get that 
		\begin{align*}
			\limsup_{\varepsilon \rightarrow 0}\frac{Q(d, f, \psi_{2\varepsilon} , 2 \varepsilon, K_\alpha)-4\gamma}{|\log\varepsilon|} 
			&\le \limsup_{\varepsilon \rightarrow 0}\frac{\inf_{|\xi|<\varepsilon} \left( h_{\mu_\varepsilon}(f, \xi)+ \int_X \psi_{\varepsilon}  d \mu_\varepsilon \right)}{|\log\varepsilon|}\\
			&\leq \sup_{(\mu_{\theta })_{\theta }\in\mathcal{M}(\bar\mu)}\limsup_{\theta\to0}\frac{ \inf_{|\xi|<\theta}h_{\mu_{\theta }}(f,\xi)}{|\log\theta|} +\int_X \psi  d \bar\mu    \\
			&=H(\bar\mu)+\int_X \psi\  d \bar\mu .
		\end{align*}
		Thus, using by \eqref{eq: lower b1 aux} and \eqref{eq:ineq_upper_topo_capa_mdim} we conclude that 
		\[\mathrm{\overline{mdim}_M^B}\,\Big(K_\alpha,f,d,\psi \Big)\leq H(\bar\mu)+\int_X \psi\  d \bar\mu \]
		which is the second upper bound for $\mathrm{\overline{mdim}_M^B}\,\Big(K_\alpha,f,d,\psi \Big)$ that we were trying to establish. The third one will be obtained in Section \ref{sec: conclusion of proof}.

	\subsection{Lower bounds for $\mathrm{\overline{mdim}_M^B}\,\Big(K_\alpha,f,d,\psi \Big)$}
	The strategy to get the lower bounds for $\mathrm{\overline{mdim}_M^B}\,\Big(K_\alpha,f,d,\psi \Big)$ consists in constructing a fractal set $F$ contained in $K_\alpha$ and a special probability measure $\eta$ supported on $F$ that satisfies the hypothesis of the so called Pressure Distribution Principle (see Lemma \ref{lem:pressure distribution}). This will be enough
	to get the desired inequality. As a step towards the definition of $F$, we introduce three families of finite sets $S_k$, $\mathcal{C}_k$ and $\mathcal T_k$ which play a major role in the construction of $F$ and $\eta$. This idea was already explored in \cite{BR,Liu-Liu,TV,Tho,MR2765447} and we follow them closely.
	
	Fix $\gamma>0$ and let $\{\delta_k\}_{k\in\mathbb{N}}$ be a decreasing sequence converging to $0$. Take $\varepsilon
	=\varepsilon(\gamma)>0$ sufficiently small and $\mu\in \mathcal M_{f}(X,\varphi,\alpha)$ so that 
	\begin{align}\label{eq: choice vareps H}
		\frac{\inf_{|\xi|<5\varepsilon}\left( h_{\mu}(f,\xi)+ \int\psi_{5\varepsilon}  d\mu \right) -5\gamma }{|\log5\varepsilon|} \geq\mathrm {H_\varphi\overline{mdim}_M}\,(f,\alpha,d,\psi)-\gamma,
	\end{align}
	\begin{equation}\label{eq: choice varepsilon for P}
		\frac{m(K_\alpha,f,\psi_{\varepsilon/2},\varepsilon/2)}{|\log \varepsilon/2|}\leq \mathrm{\overline{mdim}_M^B}\,(K_\alpha,f,d,\psi) +\gamma
	\end{equation}
	and 
	\begin{equation}\label{eq: coiche vareps gamma}
		\frac{|\log 5\varepsilon|}{|\log \varepsilon/2|}\geq (1-\gamma) \; \text{ and }\; \left| \frac{\log 4 \int\psi d\mu-2  (\log5\varepsilon/4)\operatorname{Var}(\psi, 2\varepsilon )}{\log \varepsilon/2}\right|<\gamma.
	\end{equation}

	Let $\mathcal U$ be a finite open cover of $X$ with diameter $\mathrm{diam}(\mathcal U)\leq 5\varepsilon$ and Lebesgue number $\mathrm{Leb}(\mathcal U)\geq \frac{5\varepsilon}{4}$. In the next lemma we present an auxiliary measure which is a finite combination of ergodic measures and ``approximates" $\mu$. Its proof may be obtained by making trivial adjustments to the proof of \cite[Lemma 7]{BR}. Before we proceed with the statement, let us recall that given a partition $\xi$ of $X$, $\xi\succ\mathcal U$ means that $\xi$ refines $\mathcal{U}$, that is, each element of $\xi$ is contained in an element of $\mathcal U$.

	\begin{lemma}\label{lem: approx}
		For each $k\in\mathbb N$, there exists a measure $\nu_k \in \mathcal M_{f}(X)$ satisfying    
		\begin{itemize}
			\item[(a)] $\nu_k=\displaystyle\sum_{i=1}^{j(k)} \lambda_i\nu^k_i$,  where $\lambda_i>0$, $\displaystyle\sum_{i=1}^{j(k)} \lambda_i=1$ and $\nu^k_i\in  \mathcal M_f^{\text{erg}}(X) $;
			\item[(b)] $ \displaystyle \inf_{\xi\succ\mathcal U} h_\mu\left(f,\xi\right)\leq \inf_{\xi\succ\mathcal U}\displaystyle h_{\nu_k}\left(f,\xi\right)+\delta_k/2$;
			\item[(c)] $\left|\displaystyle\int_X \varphi\; d\nu_k-\displaystyle\int_X\varphi\; d\mu\right|<\delta_k$;
			\item[(d)] $\left|\displaystyle\int_X \psi\; d\nu_k-\displaystyle\int_X\psi\; d\mu\right|<\delta_k$.
		\end{itemize}
	\end{lemma}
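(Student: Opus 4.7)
The plan is to realize $\nu_k$ via a finite discretization of the ergodic decomposition of $\mu$. By the ergodic decomposition theorem,
\[
\mu = \int_{\mathcal M^{erg}_f(X)} m \, d\mathbb{P}_\mu(m),
\]
for a Borel probability $\mathbb P_\mu$ on $\mathcal M^{erg}_f(X)$. I would partition $\mathcal M^{erg}_f(X)$ into finitely many Borel sets $A_1, \ldots, A_{j(k)}$ of positive $\mathbb P_\mu$-measure, fine enough that on each $A_i$ the continuous maps $m \mapsto \int_X \varphi \, dm$ and $m \mapsto \int_X \psi \, dm$ oscillate by less than $\delta_k$; then pick a representative $\nu_i^k \in A_i$ and assign mass $\lambda_i = \mathbb P_\mu(A_i)$, so that $\nu_k := \sum_{i=1}^{j(k)} \lambda_i \nu_i^k$ satisfies (a) by construction. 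Items (c) and (d) follow by the triangle inequality, comparing $\int_X \varphi \, d\mu = \sum_i \int_{A_i} \int_X \varphi \, dm \, d\mathbb P_\mu(m)$ against $\sum_i \lambda_i \int_X \varphi \, d\nu_i^k$ piece-by-piece, and similarly for $\psi$.

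For item (b) the key inputs are the affineness of $\mu \mapsto h_\mu(f, \xi)$ on $\mathcal M_f(X)$ and the Jacobs-type identity $h_\mu(f, \xi) = \int h_m(f, \xi) \, d\mathbb P_\mu(m)$, which together give $h_{\nu_k}(f, \xi) = \sum_i \lambda_i h_{\nu_i^k}(f, \xi)$. This reduces the task to approximating the bounded measurable integrand $m \mapsto h_m(f, \xi)$ (bounded by $\log \#\xi$) by a simple function subordinate to $\{A_i\}$. Refining $\{A_i\}$ further to also control this entropy oscillation, one would obtain a simultaneous approximation yielding (b), (c), and (d).

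The main obstacle is pushing the pointwise entropy approximation through the infimum $\inf_{\xi \succ \mathcal U}$, since entropy is at best upper semi-continuous in the measure and the partition $\{A_i\}$ cannot \emph{a priori} control $h_m(f, \xi)$ uniformly for all $\xi \succ \mathcal U$ simultaneously. My plan is to circumvent this by restricting attention to a countable family of partitions refining $\mathcal U$ (e.g.\ iterated refinements of a fixed finite partition subordinate to $\mathcal U$) within which the infimum can be approximated, and then refining $\{A_i\}$ so that $\sum_i \lambda_i h_{\nu_i^k}(f, \xi) \approx \int h_m(f, \xi) \, d\mathbb P_\mu(m)$ holds uniformly over this countable family. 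Choosing a near-minimizer $\xi_k^\ast \succ \mathcal U$ for $h_{\nu_k}(f, \cdot)$ from this family then gives $h_\mu(f, \xi_k^\ast) \leq h_{\nu_k}(f, \xi_k^\ast) + \delta_k/4$, and taking the infimum on both sides yields (b).
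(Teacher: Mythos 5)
Your construction of $\nu_k$ by discretizing the ergodic decomposition is the right framework (it is essentially what the paper, via \cite[Lemma 7]{BR}, does), and items (a), (c), (d) are handled correctly: pulling back small intervals of the (bounded) ranges of $m\mapsto\int\varphi\,dm$ and $m\mapsto\int\psi\,dm$ gives a finite Borel partition $\{A_i\}$ with the required oscillation control, and the triangle inequality does the rest. The gap is in item (b). Your plan is to fix a countable family of partitions refining $\mathcal U$ that approximates the infimum and then refine $\{A_i\}$ so that $\sum_i\lambda_i h_{\nu_i^k}(f,\xi)\approx\int h_m(f,\xi)\,d\mathbb P_\mu(m)$ \emph{uniformly} over that family. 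This fails on both counts. First, oscillation control by refining a \emph{finite} partition of $\mathcal M_f^{erg}(X)$ works only for finitely many measurable functions: the maps $m\mapsto h_m(f,\xi)$ are merely measurable, so topological fineness does not help, and while Lusin's theorem gives a large set on which countably many of them are continuous, they need not be equicontinuous, so no single finite partition controls all their oscillations. Second, the countable family itself is circular: you need it to approximate $\inf_{\xi\succ\mathcal U}h_{\nu_k}(f,\xi)$ for a measure $\nu_k$ that is only constructed \emph{after} the family and the refinement are chosen, and no universal countable family working for all measures simultaneously is exhibited.

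What is actually needed for (b) is a statement about the single bounded measurable function $g(m):=\inf_{\xi\succ\mathcal U}h_m(f,\xi)$, namely the nontrivial half of the ergodic decomposition formula for the local (cover) entropy:
\[
\inf_{\xi\succ\mathcal U}h_\mu(f,\xi)\;\le\;\int_{\mathcal M_f^{erg}(X)}\inf_{\xi\succ\mathcal U}h_m(f,\xi)\,d\mathbb P_\mu(m)
\]
(the reverse inequality is trivial). Granting this, one adds $g$ to the list of functions whose oscillation is controlled on each $A_i$; then for \emph{every} $\xi\succ\mathcal U$ one has $h_{\nu_k}(f,\xi)=\sum_i\lambda_i h_{\nu_i^k}(f,\xi)\ge\sum_i\lambda_i g(\nu_i^k)\ge\int g\,d\mathbb P_\mu-\delta_k/2\ge\inf_{\zeta\succ\mathcal U}h_\mu(f,\zeta)-\delta_k/2$, and taking the infimum over $\xi$ yields (b) with no countable family and no circularity. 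But that displayed inequality is a genuine theorem of local entropy theory (Romagnoli, Huang--Ye--Zhang), not something your refinement scheme produces; your proposal neither proves it nor cites it, and without it the argument for (b) does not close. This is precisely the point where the paper leans on the proof of \cite[Lemma 7]{BR} rather than on a soft approximation argument.
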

	
	Let $\nu_k$ be as in the previous lemma. Using the fact that each measure $\nu^k_i$ is ergodic, by the proof of \cite[Theorem 9]{Shi} there exists a finite Borel measurable partition $\xi_k$ which refines $\mathcal U$ so that
	\begin{align}\label{eq: relation h part x sep}
		h_{\nu^k_i}^K(5\varepsilon,\gamma)\leq h_{\nu_i^k}(f,\xi_k)\leq h_{\nu^k_i}^K(5\varepsilon/4,\gamma)+\delta_k.
	\end{align}
	Now, take a finite Borel partition $\xi$ refining $\mathcal{U}$ with $\mu(\partial \xi)=0$ such that
	\begin{displaymath}
		h_\mu\left(f,\xi\right)-\delta_k/2-\gamma/2\leq \inf_{\zeta \succ\mathcal U}\displaystyle h_{\nu_k}\left(f,\zeta \right).
	\end{displaymath}
	In particular, since $\xi_k \succ \mathcal{U}$,
	\begin{equation}\label{eq: hmu X hnu}
		h_\mu\left(f,\xi\right)-\delta_k/2-\gamma/2\leq  h_{\nu_k}\left(f,\xi_k\right).
	\end{equation}
	Moreover, since $\xi\succ\mathcal U$, it follows that $|\xi|<5\varepsilon$. Thus, by \eqref{eq: choice vareps H},
	\begin{align}\label{eq:escolha de xi}
		\frac{h_{\mu}(f,\xi)+\int \psi_{5\varepsilon}  d\mu-5\gamma}{|\log5\varepsilon|} \geq\mathrm {H_\varphi\overline{mdim}_M}\,(f,\alpha,d,\psi)-\gamma.
	\end{align}

	\subsubsection{Construction of $\mathcal{S}_k$} We start observing that,
	since each $\nu^k_i$ is ergodic, there exists $\ell_k\in \mathbb N$ large enough for which the set 
	\begin{align}
		\label{def:sets}
		Y_{i}(k)=\left\{x\in X: \left|\frac{1}{n}\sum_{j=0}^{n-1}\varphi(f^j(x))-\int_X\varphi\; d\nu^k_i\right|<\delta_k\;\; \forall \; n\geq \ell_k\right\}
	\end{align}
	has $\nu^k_i$-measure bigger than $1-\gamma$ for every $k\in\mathbb  N$ and $i\in\{1,\dots,j(k)\}$. 
	By \cite[Lemma 3.6]{Tho}, there exists $\hat n_k\uparrow\infty$ with $[\lambda_i\hat n_k]\geq \ell_k$ and a $([\lambda_i\hat n_k],5\varepsilon/4)$-separated set $E_{k,i}([\lambda_i\hat n_k],5\varepsilon/4)$ of $Y_i(k)$ with maximal cardinality such that 
	\[M_{k, i}:=\sum_{x \in E_{k,i}([\lambda_i\hat n_k],5\varepsilon/4)} \exp \left( \sum_{i=0}^{\hat n_k-1} \psi_{5\varepsilon/4} \left(f^i (x)\right)\right)\]
	satisfies
	\begin{equation}\label{eq: Mki inequality}
		M_{k, i} \ge  \exp \left(\left[\lambda_i \hat{n}_k\right]\left(h^K_{\nu_i^k}(5\varepsilon/4,\gamma)+\int  \psi_{5\varepsilon/4}  d \nu_i^k-\frac{4}{j(k)} \gamma \right)\right) .
	\end{equation}
	Furthermore,  the sequence $\hat n_k$ can be chosen such that $\hat n_k\geq 2^{ m_k({j(k)}-1)}$ where $m_k=m(\varepsilon/2^{k})$ is as in the definition of the specification property. Let $n_k:=m_k({j(k)}-1)+\sum_{i}[\lambda_i\hat n_k]$. Observe that $n_k/\hat n_k\to 1$.
	
	By the specification property, for each
	$$(x_1,\ldots,x_{j(k)})\in\prod_{i=1}^{j(k)}E_{k,i}([\lambda_i\hat n_k],5\varepsilon/4),$$
	there exists $y=y(x_1,\dots,x_{j(k)})\in X$ so that the pieces of orbits
	\begin{displaymath}
		\{x_i,f(x_i),\dots,f^{[\lambda_i\hat n_k]-1}(x_i): i= 1,\ldots,j(k)\}
	\end{displaymath}
	are $\varepsilon/2^k$-shadowed by $y$ with gap $m_k$. It is then proved in \cite[Proposition 6]{BR} that if $(x_1,\dots,x_{j(k)})\not=(x'_1,\dots,x'_{j(k)})$ then $y(x_1,\dots,x_{j(k)})\not=y(x'_1,\dots,x'_{j(k)})$. Moreover, it is observed that 
	\begin{align*}
		\mathcal S_k=\{y(x_1,\dots,x_{j(k)}): x_i\in E_{i,k}([\lambda_i\hat n_k],5\varepsilon/4) \text{ for } i=1,\ldots,j(k)\}
	\end{align*}
	is a $(n_k, 9\varepsilon/8)$-separated set with cardinality $M_k:=\prod_{i=1}^{j(k)}M_{k,i}$. Combining \eqref{eq: relation h part x sep}, \eqref{eq: hmu X hnu} and \eqref{eq: Mki inequality} with the the choices of $\varepsilon$, $\gamma$ and $n_k$ and recalling that $n_k/\hat n_k\to 1$ we get that  for $k$ sufficiently large
	\begin{align}\label{eq:200} 
		M_k&\geq \exp{\left(\sum_{i=1}^{j(k)}[\lambda_i\hat n_k]\left(h_{\nu^k_i}^K(5\varepsilon/4,\gamma)+\int \psi_{5\varepsilon/4} d\nu_i^k-\frac{4\gamma}{j(k)}\right)\right)}\\  \nonumber
		&\geq \exp{\left((1-\gamma)\hat n_k\sum_{i=1}^{j(k)}\lambda_i\left(h_{\nu^k_i}^K(5\varepsilon/4,\gamma)+\int \psi_{5\varepsilon/4} d\nu_i^k\right)-4(1-\gamma)\hat n_k\gamma\right)}\\  \nonumber
		&\geq \exp{\left((1-\gamma)\hat n_k \left(h_{\nu_k}(f,\xi_k)+ \int \psi_{5\varepsilon/4} d\nu_k-4 \gamma-\delta_k\right)\right)}\\  \nonumber
		&\geq \exp{\left( (1-\gamma)^2n_k\left(h_{\mu}(f,\xi)+\int \psi_{5\varepsilon/4} d\mu-9\gamma/2 - 3\delta_k \right)\right)}\\ \nonumber
		&\geq \exp{\left( (1-\gamma)^2n_k\left(h_{\mu}(f,\xi)+\int \psi_{5\varepsilon/4} d\mu-5\gamma \right)\right)}.
	\end{align}
	Moreover, given $y=y(x_1,\dots,x_{j(k)}) \in \mathcal S_k$, we have from the proof of \cite[Proposition 6]{BR} that for sufficiently large $k$,
	\begin{align}\label{eq:201}
		\left|\frac{1}{n_k}S_{n_k}\varphi(y)-\alpha\right|\leq 2\delta_k+\mathrm{Var}(\varphi,\varepsilon/2^k)+\frac{1}{k}.
	\end{align}

	\subsubsection{Construction of $\mathcal{C}_k$} Now we begin to construct the intermediate sets $\left\{\mathcal{C}_k\right\}_{k \in \mathbb{N}}$. Let $N_k$ be a sequence in $\N$ that increases to $\infty$ sufficiently fast so that
	\[
	\lim _{k \rightarrow \infty} \frac{n_{k+1}+m_{k+1}}{N_k}=0 \;\text{ and } \; \lim _{k \rightarrow \infty} \frac{N_1\left(n_1+m_1\right)+\ldots+N_k\left(n_k+m_k\right)}{N_{k+1}}=0 
	\]
	and let us enumerate the points in $\mathcal{S}_k$ and write them as
	\[
	\mathcal{S}_k=\left\{x_i^k: i=1,2, \ldots, \#\mathcal{S}_k\right\}.
	\]
	
	For each $k\in \N$, let us consider the set of words of length $N_k$ with entries in $\left\{1,2, \ldots,\#\mathcal{S}_k\right\}$. Then, each such word $\underline{i}=\left(i_1, \ldots, i_{N_k}\right)$ represents a point in $\mathcal{S}_k^{N_k}$. Using the specification property, we can choose a point $y:=y\left(i_1, \ldots, i_{N_k}\right)\in X$ such that it $\varepsilon\slash 2^k$-shadows, with gap $m_k$, the pieces of orbits $\{x_{i_j}^k,f(x_{i_j}^k),\dots,f^{n_k-1}(x_{i_j}^k)\}$, $j=1,2,\ldots, N_k$. More precisely, $y$ satisfies
	\[
	d_{n_k}\left(x_{i_j}^k, f^{a_j} (y)\right)<\frac{\varepsilon}{2^k} \;\text{ for all } j \in\left\{1, \ldots, N_k\right\},
	\]
	where $a_j=(j-1)\left(n_k+m_k\right)$. Then, using these points we define
	\[
	\mathcal{C}_k=\left\{y\left(i_1, \ldots, i_{N_k}\right) \in X:\left(i_1, \ldots, i_{N_k}\right) \in\left\{1, \ldots, \#\mathcal{S}_k\right\}^{N_k}\right\} .
	\]
	Moreover, consider $c_k=N_k n_k+\left(N_k-1\right) m_k$. Observe that $c_k$ gives the amount of time for which the orbit of points in $\mathcal{C}_k$ has been prescribed. We now observe that different sequences in $\left\{1, \ldots, \#\mathcal{S}_k\right\}^{N_k}$ give rise to different points in $\mathcal{C}_k$ and that such points are uniformly separated with respect to $d_{c_k}$. 
	
	\begin{lemma}[Lemma 5.1 of \cite{TV}] \label{lem: sep}
		If $(i_1,\dots,i_{N_k})\not=(j_1,\dots,j_{N_k})$, then
		\[
		d_{c_k}\left(y(i_1,\dots,i_{N_k}),y(j_1,\dots,j_{N_k})\right)>\frac{17 \varepsilon}{16}.
		\]
		In particular $\# \mathcal{C}_k=(\#\mathcal{S}_k)^{N_k}$.
	\end{lemma}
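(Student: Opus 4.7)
\medskip

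\textbf{Proof proposal.} The plan is a direct triangle-inequality argument, exploiting two ingredients: the $(n_k,9\varepsilon/8)$-separation of $\mathcal{S}_k$ and the $(\varepsilon/2^k)$-accuracy of the shadowing that defines the points of $\mathcal{C}_k$.

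First, fix two distinct index tuples $(i_1,\dots,i_{N_k})\neq (j_1,\dots,j_{N_k})$ and let $\ell\in\{1,\dots,N_k\}$ be any coordinate on which they differ, so that $x_{i_\ell}^k\neq x_{j_\ell}^k$. Since $\mathcal{S}_k$ is $(n_k,9\varepsilon/8)$-separated,
\[
d_{n_k}\bigl(x_{i_\ell}^k,x_{j_\ell}^k\bigr)>\frac{9\varepsilon}{8}.
\]
Let $y=y(i_1,\dots,i_{N_k})$ and $y'=y(j_1,\dots,j_{N_k})$. By the shadowing conditions used to define $\mathcal{C}_k$,
\[
d_{n_k}\bigl(x_{i_\ell}^k, f^{a_\ell}(y)\bigr)<\frac{\varepsilon}{2^k},\qquad d_{n_k}\bigl(x_{j_\ell}^k, f^{a_\ell}(y')\bigr)<\frac{\varepsilon}{2^k},
\]
where $a_\ell=(\ell-1)(n_k+m_k)$.

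Next I apply the triangle inequality in the metric $d_{n_k}$ to the pair $(f^{a_\ell}(y),f^{a_\ell}(y'))$, which gives
\[
d_{n_k}\bigl(f^{a_\ell}(y),f^{a_\ell}(y')\bigr)\;\ge\;\frac{9\varepsilon}{8}-\frac{2\varepsilon}{2^k}.
\]
For $k$ sufficiently large (concretely $k\ge 6$, and in any case we are free to discard the first few indices in the asymptotic construction), one has $2\varepsilon/2^k<\varepsilon/16$, so the right-hand side exceeds $18\varepsilon/16-\varepsilon/16=17\varepsilon/16$. Now I translate this back to a $d_{c_k}$-estimate: since $a_\ell+n_k-1\le(N_k-1)(n_k+m_k)+n_k-1=c_k-1$, the Bowen ball of length $c_k$ centered at $y$ contains the length-$n_k$ piece starting at time $a_\ell$, so $d_{c_k}(y,y')\ge d_{n_k}(f^{a_\ell}(y),f^{a_\ell}(y'))>17\varepsilon/16$.

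Finally, the ``in particular'' statement is immediate: the map $(i_1,\dots,i_{N_k})\mapsto y(i_1,\dots,i_{N_k})$ is injective (distinct tuples yield points at positive $d_{c_k}$-distance, hence different points), so $\#\mathcal{C}_k=(\#\mathcal{S}_k)^{N_k}$. The only real subtlety is the bookkeeping on $k$ needed to absorb the shadowing error $2\varepsilon/2^k$ into the gap between $9\varepsilon/8$ and $17\varepsilon/16$; since the whole construction is carried out for large $k$, this is harmless.
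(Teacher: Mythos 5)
Your argument is exactly the standard one behind the cited Lemma 5.1 of Takens--Verbitskiy (and what the paper implicitly relies on): pick a coordinate where the words differ, use the $(n_k,9\varepsilon/8)$-separation of $\mathcal S_k$, subtract the two shadowing errors $\varepsilon/2^k$ via the triangle inequality, and observe that the relevant $n_k$-block lies inside the first $c_k$ iterates, so the proposal is correct and takes essentially the same route. Your ``$k$ large'' caveat is not a defect of your proof but a feature of the paper's constants (note $17\varepsilon/16=9\varepsilon/8-2\varepsilon/2^{5}$, so $k\ge 5$ already suffices once one keeps the strict inequalities), and for the ``in particular'' counting statement the weaker bound $9\varepsilon/8-2\varepsilon/2^{k}\ge\varepsilon/8>0$ gives injectivity for every $k$.
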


	\subsubsection{Construction of $\mathcal{T}_k$} The final intermediate step in the construction of the fractal set $F$ consists in building a third auxiliary family of sets that will be denoted by $\left\{\mathcal{T}_k\right\}_{k \in \mathbb{N}}$. This will be done inductively. Let $\mathcal T_1=\mathcal C_1$. Now, suppose that we have already constructed the set $\mathcal T_k$ and let us describe how to obtain $\mathcal T_{k+1}$. Consider initially $t_1=c_1$ and define $t_{k+1}=t_k+m_{k+1}+c_{k+1}$ for $k\geq 2$. Then, for $x\in \mathcal T_k$ and $y\in \mathcal C_{k+1}$, let $z=z(x,y)$ be some point such that
	\begin{align}\label{eq:4}
		d_{t_k}(x,z)<\frac{\varepsilon}{2^{k+1}} \text{ and } d_{c_{k+1}}(y,f^{t_k+m_{k+1}}(z))<\frac{\varepsilon}{2^{k+1}}.
	\end{align}
	Observe that the existence of such a point is guaranteed by the specification property of $f$. Finally, we define
	\[
	\mathcal T_{k+1}=\{z(x,y):x\in \mathcal T_k, \; y\in \mathcal C_{k+1}\}.
	\]
	
	By proceeding as in the proof of the Lemma \ref{lem: sep} we can see that different pairs $(x,y)$, $x\in \mathcal T_k$, $y\in \mathcal C_{k+1}$, produce different points $z=z(x,y)$. In particular, $\# \mathcal T_{k+1}=\#\mathcal T_k \cdot \#\mathcal C_{k+1}$. Therefore, proceeding inductively,
	\[\# \mathcal{T}_k=\# \mathcal{C}_1 \ldots \# \mathcal{C}_k=   (\#\mathcal S_1)^{N_1} \ldots   (\#\mathcal S_k)^{N_k} .
	\]
	Moreover, by Lemma \ref{lem: sep} and \eqref{eq:4} we have that for every $x\in  \mathcal T_k$ and $y,y'\in \mathcal C_{k+1}$ with $y\not=y'$,
	\begin{align}\label{eq:500}
		d_{t_k}(z(x,y),z(x,y'))<\frac{\varepsilon}{2^{k+2}} \;
		\text{ and } \;   d_{t_{k+1}}(z(x,y),z(x,y'))>\frac{3\varepsilon}{4}.
	\end{align}

	\subsubsection{Construction of $F$} For every $k\in\mathbb N$, let us consider
	\[
	F_k:=\bigcup_{x\in \mathcal T_k}\overline B_{t_k}(x, \varepsilon/2^{k+1}),
	\]
	where $\overline B_{ t_k}(x, \varepsilon/2^{k+1})$ denotes the closure of the open ball $ B_{t_k}(x, \varepsilon/2^{k+1})$. As a simple consequence of \eqref{eq:500} we have the following observation.

	\begin{lemma}[Lemma 5.2 of \cite{TV}] \label{lemma:11}
		For every $k\in\mathbb N$, the following is satisfied:\\
		\noindent(1) for any $x, x'\in \mathcal T_k$, $x \not= x'$, the sets $\overline B_{t_k}(x, \varepsilon/2^{k+1})$ and 
		$\overline B_{t_k}(x', \varepsilon/2^{k+1})$ are disjoint;\\
		\noindent(2) if $z\in \mathcal T_{k+1}$ is such that $z=z(x,y)$ for some $x \in\mathcal T_k$ and $y\in \mathcal{C}_{k+1}$, then
		\[
		\overline B_{t_{k+1}}\left(z, \frac{\varepsilon}{2^{k+2}}\right)\subset \overline B_{t_k}\left(x, \frac{\varepsilon}{2^{k+1}}\right).
		\]
		Hence, $F_{k+1}\subset F_k$.
	\end{lemma}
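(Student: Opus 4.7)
The two assertions are intertwined and I would prove them together by induction on $k$, where the inductive step of part (1) at level $k+1$ invokes part (2) at level $k+1$ together with part (1) at level $k$. The base case $k=1$ is immediate: since $\mathcal{T}_1=\mathcal{C}_1$ and $t_1=c_1$, Lemma~\ref{lem: sep} gives $d_{t_1}(x,x')>17\varepsilon/16$ for distinct $x,x'\in\mathcal{T}_1$, and since $2\cdot \varepsilon/4 <17\varepsilon/16$, the closed $(t_1,\varepsilon/4)$-balls centered at these points are disjoint.

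For part (2), fix $z=z(x,y)\in\mathcal{T}_{k+1}$. From the defining shadowing property \eqref{eq:4} --- chosen, via the freedom in the specification property, with precision $\varepsilon/2^{k+2}$ rather than the nominal $\varepsilon/2^{k+1}$ --- one has $d_{t_k}(x,z)\leq \varepsilon/2^{k+2}$. For any $w\in \overline{B}_{t_{k+1}}(z,\varepsilon/2^{k+2})$, since $t_k\leq t_{k+1}$ the Bowen metrics satisfy $d_{t_k}(z,w)\leq d_{t_{k+1}}(z,w)\leq \varepsilon/2^{k+2}$; the triangle inequality then yields $d_{t_k}(x,w)\leq \varepsilon/2^{k+1}$, i.e.\ $w\in \overline{B}_{t_k}(x,\varepsilon/2^{k+1})$. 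Taking the union over $\mathcal{T}_{k+1}$ and $\mathcal{T}_k$ gives $F_{k+1}\subset F_k$ at once.

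For part (1) at level $k+1$, take two distinct points $z=z(x,y)$ and $z'=z(x',y')$ in $\mathcal{T}_{k+1}$. If $x=x'$ (so $y\neq y'$), then \eqref{eq:500} gives $d_{t_{k+1}}(z,z')>3\varepsilon/4 > 2\cdot \varepsilon/2^{k+2}$, so the closed $(t_{k+1},\varepsilon/2^{k+2})$-balls at $z$ and $z'$ are disjoint. If $x\neq x'$, the inductive hypothesis provides disjoint closed $(t_k,\varepsilon/2^{k+1})$-balls at $x$ and $x'$, and part (2) forces the closed $(t_{k+1},\varepsilon/2^{k+2})$-balls at $z$ and $z'$ to sit inside those disjoint balls, so they are disjoint as well. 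I expect the main delicate point to be precisely the constant bookkeeping in part (2): one must ensure that the shadowing tolerance used to build $z(x,y)$ is small enough for the two triangle-inequality contributions to combine to at most $\varepsilon/2^{k+1}$, which forces the precision to be $\varepsilon/2^{k+2}$ rather than $\varepsilon/2^{k+1}$. Everything else is straightforward induction and an appeal to the earlier separation results already established in the construction of $\mathcal S_k$, $\mathcal C_k$ and $\mathcal T_k$.
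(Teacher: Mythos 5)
Your argument is correct and is essentially the standard nesting argument from \cite{TV} that the paper invokes without proof: the base case from Lemma~\ref{lem: sep}, the inclusion in (2) by the triangle inequality, and disjointness at level $k+1$ by splitting into the cases $x=x'$ (using the second inequality in \eqref{eq:500}) and $x\neq x'$ (using the inductive hypothesis together with (2)). Your remark about the constants is well taken: with \eqref{eq:4} exactly as printed (shadowing precision $\varepsilon/2^{k+1}$) the triangle inequality only yields $d_{t_k}(x,w)<3\varepsilon/2^{k+2}$, which does not give the closed-ball inclusion of radius $\varepsilon/2^{k+1}$, so the construction must indeed be read with precision $\varepsilon/2^{k+2}$ (at the cost of using the gap $m(\varepsilon/2^{k+2})$ in place of $m_{k+1}$, which is harmless for the rest of the bookkeeping); this is exactly the radius/precision bookkeeping in the original construction of Takens and Verbitskiy, and it is also what is implicitly needed for the first inequality in \eqref{eq:500}.
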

	
	Then, we define the fractal set $F$ as
	\[F:=\bigcap_{k\in\mathbb N}F_k.\]
	Observe that, since each $F_k$ is a closed and non-empty set and, moreover, $F_{k+1}\subset F_k$, the set $F$ is non-empty and closed itself. Furthermore, using \eqref{eq:201} we may prove the following lemma.
	\begin{lemma}[Lemma 5.3 of \cite{TV}] \label{lemma: F_subset_K_alpha} 
		Under the above conditions,
		\[F\subset K_\alpha.\]
	\end{lemma}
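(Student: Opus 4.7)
The plan is to fix an arbitrary $p \in F$ and show $\frac{1}{n}S_n\varphi(p) \to \alpha$ as $n \to \infty$. Since $p \in F_k$ for every $k$, Lemma \ref{lemma:11}(1) gives a unique $x^{(k)} \in \mathcal{T}_k$ with $d_{t_k}(p, x^{(k)}) \le \varepsilon/2^{k+1}$, and Lemma \ref{lemma:11}(2) ensures the compatibility $x^{(k+1)} = z(x^{(k)}, y^{(k+1)})$ for some $y^{(k+1)} \in \mathcal{C}_{k+1}$. Unwinding the recursive constructions of $\mathcal{T}_k$ and $\mathcal{C}_j$, the orbit segment $\{p, f(p), \dots, f^{t_k-1}(p)\}$ is naturally partitioned into $N_1 + \cdots + N_k$ \emph{shadowed blocks}, together with gap intervals of total length at most $\sum_{j=1}^k N_j m_j$; the $\ell$-th block at level $j$ has length $n_j$, begins at some time $a_{j,\ell}$, and its $p$-orbit stays within distance $O(\varepsilon/2^j)$ of the orbit of some $x_{j,\ell} \in \mathcal{S}_j$ --- the bound arising from combining $d_{t_j}(p, x^{(j)}) \le \varepsilon/2^{j+1}$ with the $\varepsilon/2^j$-shadowing inequalities in \eqref{eq:4} and in the construction of $\mathcal{C}_j$.

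For $n$ with $t_k \le n < t_{k+1}$, I would decompose
\[
S_n\varphi(p) - n\alpha = \sum_{j=1}^{k}\sum_{\ell=1}^{N_j}\bigl(S_{n_j}\varphi(f^{a_{j,\ell}}(p)) - n_j\alpha\bigr) + R_k(p,n),
\]
where $R_k(p,n)$ collects the contribution of the gap intervals and of the tail $[t_k, n)$. Uniform continuity of $\varphi$ combined with \eqref{eq:201} yields, for some absolute constant $C$,
\[
\bigl|S_{n_j}\varphi(f^{a_{j,\ell}}(p)) - n_j\alpha\bigr| \le n_j\bigl(\operatorname{Var}(\varphi, C\varepsilon/2^j) + 2\delta_j + 1/j\bigr),
\]
while $|R_k(p,n)| \le (|\alpha| + \|\varphi\|_\infty)\,G_k$ with $G_k$ denoting the total unshadowed time up to $n$.

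Dividing through by $n \ge t_k$ and invoking the scheduling constraints $(n_{k+1}+m_{k+1})/N_k \to 0$, $(N_1(n_1+m_1)+\cdots+N_k(n_k+m_k))/N_{k+1} \to 0$, and $m_k/n_k \to 0$ (the latter a consequence of $\hat n_k \ge 2^{m_k(j(k)-1)}$), one checks that the contributions from the lower levels $j<k$, from $G_k$, and from the tail length $(t_{k+1}-t_k)$ are each $o(1)$, while the dominant level-$k$ contribution is bounded by $\operatorname{Var}(\varphi, C\varepsilon/2^k) + 2\delta_k + 1/k$, which tends to $0$ as $k \to \infty$. Therefore $\frac{1}{n}S_n\varphi(p) \to \alpha$, so $p \in K_\alpha$, establishing $F \subset K_\alpha$. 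The main obstacle is the careful bookkeeping needed to propagate shadowing errors through the three nested constructions $\mathcal{S}_j$, $\mathcal{C}_j$, $\mathcal{T}_k$ and to verify that the scheduling rates make each piece of the remainder negligible; once this is done the rest reduces to uniform continuity of $\varphi$, the estimate \eqref{eq:201}, and the rapid growth of the sequences $\hat n_k$ and $N_k$.
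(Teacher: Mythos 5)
Your overall strategy---representing $p\in F$ through the nested balls around points of $\mathcal T_k$, cutting the Birkhoff sum into blocks shadowing points of $\mathcal S_j$, and feeding in \eqref{eq:201}---is the right one (the paper does not rewrite this argument; it simply quotes Lemma 5.3 of \cite{TV}, for which \eqref{eq:201} is the needed input). But there is a genuine gap in your treatment of times $n$ with $t_k\le n<t_{k+1}$: you place the entire segment $[t_k,n)$ into the remainder $R_k(p,n)$, bound it by $(|\alpha|+\|\varphi\|)\,G_k$, and assert that the contribution of ``the tail length $(t_{k+1}-t_k)$'' is $o(1)$. This is false. The very scheduling condition $\bigl(N_1(n_1+m_1)+\cdots+N_k(n_k+m_k)\bigr)/N_{k+1}\to0$ that you invoke forces $t_k/t_{k+1}\to0$ (compare the analogous observation $t_{k-1}/t_k\to 0$ in the proof of Lemma \ref{lem1}), so $t_{k+1}-t_k$ is enormous compared with $t_k$; for $n$ in the bulk of $[t_k,t_{k+1})$ the ``tail'' $[t_k,n)$ occupies a fraction of $[0,n)$ arbitrarily close to $1$, and a crude bound by $\|\varphi\|$ times its length, divided by $n\ge t_k$, does not tend to $0$. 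Equivalently, if the tail is counted as unshadowed time then $G_k$ can be of order $n$, and your estimate on $R_k$ is vacuous.

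The repair is precisely the content of the argument in \cite{TV,MR2765447}: for $t_k\le n<t_{k+1}$ you must also exploit the level-$(k+1)$ structure. Since $p$ lies in $\overline B_{t_{k+1}}\bigl(z_{k+1}(\underline p),\varepsilon/2^{k}\bigr)$ and $z_{k+1}(\underline p)=z\bigl(z_k(\underline p),y(\underline p_{k+1})\bigr)$ with $y(\underline p_{k+1})\in\mathcal C_{k+1}$, on $[t_k+m_{k+1},n)$ the orbit of $p$ shadows, up to errors controlled by \eqref{eq:4} and the construction of $\mathcal C_{k+1}$, the successive level-$(k+1)$ blocks of length $n_{k+1}$, each of which is the orbit segment of a point of $\mathcal S_{k+1}$ and hence satisfies \eqref{eq:201} with $k+1$ in place of $k$; only the single incomplete block at the end, of length at most $n_{k+1}+m_{k+1}$, together with the gaps of length $m_{k+1}$, may be estimated crudely, and it is exactly here that the first scheduling condition $(n_{k+1}+m_{k+1})/N_k\to0$ (with $t_k\ge N_k$) is used. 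Once the completed level-$(k+1)$ blocks are included in the main sum, your bookkeeping for the levels $j\le k$, the gap intervals, and the uniform-continuity corrections $\operatorname{Var}(\varphi,C\varepsilon/2^j)$ goes through and yields $\tfrac1n S_n\varphi(p)\to\alpha$, hence $F\subset K_\alpha$.
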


	\subsubsection{Construction of $\eta$}
	Now, in order to construct $\eta$, we present a useful representation of points in $F$. For this purpose we follow closely the material in \cite{Tho,MR2765447}. More precisely, we observe that each point $p \in F$ can be uniquely represented by a sequence $\underline{p}=\left(\underline{p}_1, \underline{p}_2, \underline{p}_3, \ldots\right)$, where $\underline{p}_i=\left(p_1^i, \ldots, p_{N_i}^i\right) \in$ $\left\{1,2, \ldots, M_i\right\}^{N_i}$ (recall that $M_i=\# \mathcal{S}_i$). In fact, each point in $\mathcal{T}_k$ can be uniquely represented by a finite word $\left(\underline{p}_1, \ldots, \underline{p}_k\right)$. This can be seen using the following notation. Let $y(\underline{p}_i) \in \mathcal{C}_i$ and consider $z_1(\underline{p})=y (\underline{p}_1 )$. Then, proceeding inductively, let $z_{i+1}(\underline{p})=z (z_i(\underline{p}), y (\underline{p}_{i+1} ) ) \in \mathcal{T}_{i+1}$. Note that we can also write $z_i(\underline{p})$ as $z (\underline{p}_1, \ldots, \underline{p}_i )$. Then define 
	\[
	p:=\bigcap_{i \in \mathbb{N}} \overline  B_{t_i}\left(z_i(\underline{p}), \frac{\varepsilon }{2^{i-1}}\right) .
	\]
	Thus, from this construction we can see that points in $F$ can be uniquely represented in this way. 
	
	The desired measure $\eta$ will be obtained as the limit of a sequence of atomic measures that we build now. Given $z=z (\underline{p}_1, \ldots \underline{p}_k ) \in \mathcal{T}_k$, we define
	\[
	\mathcal{L}(z):=\mathcal{L} (\underline{p}_1 ) \cdot \ldots \cdot \mathcal{L} (\underline{p}_k ),
	\]
	where, if $\underline{p}_i=\left(p_1^i, \ldots, p_{N_i}^i\right) \in\left\{1, \ldots,  \#\mathcal{S}_i\right\}^{N_i}$, then
	\[
	\mathcal{L} (\underline{p}_i ):=\prod_{l=1}^{N_i} \exp \left( S_{n_i} \psi_{5\varepsilon/4} \left(x_{p_l^i}^i\right) \right).
	\]
	Using these numbers we define
	\[
	\nu_k:=\sum_{z \in \mathcal{T}_k} \delta_z \mathcal{L}(z) 
	\]
	and $\mu_k:=$ $\left(1 / \kappa_k\right) \nu_k$, where $\kappa_k$ is the normalising constant given by
	\[
	\kappa_k:=\sum_{z \in \mathcal{T}_k} \mathcal{L}_k(z) .
	\]
	Observe that $\kappa_k=M_1^{N_1} \cdot \ldots \cdot M_k^{N_k}$ (see \cite[Lemma 3.9]{MR2765447}). The next lemma shows us that any limiting measure of the sequence $(\mu_k)_{k\in \N}$ lives on $F$. Such a measure is the desired measure that we were looking for.
	
	\begin{lemma}[Lemma 3.10 of \cite{MR2765447}] \label{lem: eta in F}
		Suppose that $\eta$ is a limit measure of the sequence of probability measures $(\mu_k)_{k\in \N}$. Then $\eta(F)=1$.
	\end{lemma}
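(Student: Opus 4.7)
The plan is to show that for each fixed $k$, the entire sequence $(\mu_{k'})_{k'\geq k}$ is supported in the closed set $F_k$, and then combine this with the nesting $F_{k+1}\subset F_k$ to deduce $\eta(F)=1$ via the Portmanteau theorem and continuity from above.

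First I would make the simple observation that $\mu_{k'}$, being a normalization of the atomic measure $\nu_{k'}=\sum_{z\in\mathcal{T}_{k'}}\delta_z\,\mathcal{L}(z)$, has support exactly $\mathcal{T}_{k'}$. The crucial step is then to verify that $\mathcal{T}_{k'}\subset F_k$ for every $k'\geq k$. For $k'=k$ this is trivial since each $x\in\mathcal{T}_k$ is the center of $\overline B_{t_k}(x,\varepsilon/2^{k+1})\subset F_k$. For $k'>k$ I would iterate part (2) of Lemma \ref{lemma:11}: any $z\in\mathcal{T}_{k'}$ is of the form $z(z',y)$ for some $z'\in\mathcal{T}_{k'-1}$ and $y\in\mathcal{C}_{k'}$, so
\[
\overline B_{t_{k'}}\bigl(z,\tfrac{\varepsilon}{2^{k'+1}}\bigr)\subset \overline B_{t_{k'-1}}\bigl(z',\tfrac{\varepsilon}{2^{k'}}\bigr);
\]
iterating down to level $k$ yields $z\in \overline B_{t_k}(x,\varepsilon/2^{k+1})\subset F_k$ for some $x\in\mathcal{T}_k$. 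Hence $\mu_{k'}(F_k)=1$ for all $k'\geq k$.

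Next, $F_k$ is closed (a finite union of closed dynamical balls), so if $\eta$ is a weak-$*$ limit of a subsequence $(\mu_{k_j})$, the Portmanteau theorem gives
\[
\eta(F_k)\;\geq\;\limsup_{j\to\infty}\mu_{k_j}(F_k)\;=\;1
\]
for every $k$ (discarding the finitely many $k_j<k$). Finally, since $\eta$ is a probability measure and $(F_k)_{k\in\N}$ is a decreasing sequence of closed sets with $F=\bigcap_k F_k$, continuity from above yields
\[
\eta(F)\;=\;\lim_{k\to\infty}\eta(F_k)\;=\;1,
\]
which is the desired conclusion.

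There is no genuine obstacle here; the entire content is the iterated nesting $\mathcal{T}_{k'}\subset F_k$, which is a direct consequence of Lemma \ref{lemma:11}(2), and everything else is a standard application of weak-$*$ convergence to closed sets together with continuity of measures on decreasing sequences.
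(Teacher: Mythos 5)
Your proof is correct and follows essentially the same argument as the one the paper invokes (Thompson's Lemma 3.10): the measures $\mu_{k'}$ with $k'\geq k$ are supported on $\mathcal{T}_{k'}\subset F_{k'}\subset F_k$, so $\eta(F_k)\geq\limsup_j\mu_{k_j}(F_k)=1$ for each closed $F_k$, and continuity from above along the nested sequence gives $\eta(F)=1$.
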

	
	In fact, by proceeding as in the proof of \cite[Lemma 5.4]{TV}, we can prove that the sequence $(\mu_k)_{k\in \N}$ actually converges in the weak$^\ast$-topology, but this fact is not needed in the sequel. An important feature of the measure $\eta$ given in Lemma \ref{lem: eta in F} and that can be obtained by exploring its definition and \eqref{eq:200} is that the $\eta$-measure of some appropriate dynamical balls decay exponentially fast. More precisely,
	
	\begin{lemma}[Lemma 3.17 of \cite{MR2765447} and Lemma 3.20 of \cite{Tho}] \label{lem: decay}
		Let $C:=h_{\mu}(f,\xi)+\int \psi_{5\varepsilon/4}  d\mu$ and $D:=\operatorname{Var}(\psi_{5\varepsilon/4} , 2\varepsilon )$. Then, for any $n$ sufficiently large and $q\in X$ such that $B_n\left(q,\frac{\varepsilon}{2}\right)\cap F\neq \emptyset$, we have that
		\[
		\limsup_{l\to\infty}\mu_{l}\left(B_n\left(q,\frac{\varepsilon}{2}\right)\right)
		\leq \exp \left(-n (1-\gamma)^2\left(C-5\gamma+2D \right)+ S_n  \psi_{5\varepsilon/4}(q) \right).
		\]
	\end{lemma}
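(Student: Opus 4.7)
The plan is to exploit the symbolic coding of the atoms of $\mu_l$: each $z \in \mathcal{T}_l$ is uniquely indexed by a sequence $(\underline{p}_1,\ldots,\underline{p}_l)$, and $\mathcal{L}(z)$ factorises into a product of stage-by-stage weights, so controlling $\mu_l(B_n(q,\varepsilon/2))$ reduces to a combinatorial count combined with a variation estimate.

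First I would locate $n$ in the prescribed schedule by choosing $k$ with $t_k \le n < t_{k+1}$, and when $n \ge t_k + m_{k+1}$ writing $n - t_k - m_{k+1} = j\,n_{k+1} + r$ with $0 \le j < N_{k+1}$, $0 \le r < n_{k+1}$ (the residual case $n < t_k + m_{k+1}$ is handled by setting $j=0$ and is simpler). Using Lemma \ref{lem: sep} together with Lemma \ref{lemma:11} and \eqref{eq:500}, I would argue that any two atoms $z,z' \in \mathcal{T}_l$ inside $B_n(q,\varepsilon/2)$ must share the prefix $(\underline{p}_1,\ldots,\underline{p}_k, p_1^{k+1},\ldots,p_j^{k+1})$; otherwise the separation would force $d_n(z,z') > \varepsilon$, incompatible with both being $\varepsilon/2$-close to $q$ in $d_n$. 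The sum $\sum_{z \in \mathcal{T}_l \cap B_n} \mathcal{L}(z)$ therefore factors as a fixed prefix contribution $\mathcal{L}(\underline{p}_1)\cdots\mathcal{L}(\underline{p}_k)\prod_{s=1}^{j}\exp\bigl(S_{n_{k+1}}\psi_{5\varepsilon/4}(x^{k+1}_{p_s^{k+1}})\bigr)$ times a free sum bounded by $M_{k+1}^{N_{k+1}-j}M_{k+2}^{N_{k+2}}\cdots M_l^{N_l}$. Dividing by $\kappa_l = M_1^{N_1}\cdots M_l^{N_l}$ cancels the tail factors, so $\mu_l(B_n(q,\varepsilon/2))$ is at most the fixed prefix contribution divided by $M_1^{N_1}\cdots M_k^{N_k}M_{k+1}^{j}$.

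Next I would estimate the numerator by replacing each prescribed orbit piece with the corresponding iterate of $q$. The shadowing in \eqref{eq:4} gives $d_{n_i}(x^i_{p_s^i},f^{a_s(i)}(z))<\varepsilon/2^{i-1}$ at stage $i \le k$ (and analogously at positions $s \le j$ of stage $k+1$), and $d_n(z,q)<\varepsilon/2$ then yields $d_{n_i}(x^i_{p_s^i},f^{a_s(i)}(q))\le 2\varepsilon$ for $k$ large. Summing the resulting variation estimates, the numerator is bounded by $\exp\bigl(S_n\psi_{5\varepsilon/4}(q) + nD + (\text{gap/remainder terms})\bigr)$, where the gap and remainder corrections contribute at most another $nD$ after using the rapid growth of $(N_k)$ and the condition $\hat n_k \ge 2^{m_k(j(k)-1)}$. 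The denominator $M_1^{N_1}\cdots M_k^{N_k}M_{k+1}^{j}$ is bounded below by $\exp\bigl(n(1-\gamma)^2(C - 5\gamma)\bigr)$ via \eqref{eq:200} and the observation that $\sum_{i\le k}N_i n_i + j\,n_{k+1}$ differs from $n$ by a vanishing fraction. Combining the two bounds gives the claimed exponential estimate uniformly in $l$, so the $\limsup_{l\to\infty}$ is harmless.

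The main obstacle will be the careful variation bookkeeping: one must verify that the errors produced by the shadowing gaps $m_i$, the trailing block of length $r < n_{k+1}$ that lies only partially inside $B_n(q,\varepsilon/2)$, and the difference between $n$ and the prescribed time $\sum_{i\le k}N_i n_i + jn_{k+1}$ are all absorbed into the single term $2nD$ together with the $-5\gamma$ slack already built into $C$ by the construction.
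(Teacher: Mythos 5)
Your overall strategy is the intended one: the paper gives no proof of this lemma (it is imported from Thompson's Lemma 3.17/3.20), and the argument behind it is exactly the counting scheme you outline --- locate $n$ in the schedule $t_k\le n<t_{k+1}$, show that the atoms of $\mu_l$ lying in $B_n(q,\varepsilon/2)$ share a coding prefix, factor $\mathcal{L}$, cancel the free tail against $\kappa_l$, and compare the prefix weight with $S_n\psi_{5\varepsilon/4}(q)$ while bounding the denominator below via \eqref{eq:200}. However, two concrete points in your plan would prevent it from yielding the inequality as stated.

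First, the sign of the $2D$ term: in the exponent $-n(1-\gamma)^2\left(C-5\gamma+2D\right)+S_n\psi_{5\varepsilon/4}(q)$ the quantity $2D$ is multiplied by $-n(1-\gamma)^2$, so it \emph{strengthens} the required decay; it is not slack. Your plan absorbs the variation error from replacing the prescribed orbit pieces by iterates of $q$, together with the gap/remainder terms, ``into the single term $2nD$'', but these errors enter with the opposite sign: the computation you describe naturally produces a bound of the form $\exp\bigl(-n(1-\gamma)^2(C-5\gamma)+S_n\psi_{5\varepsilon/4}(q)+nD+o(n)\bigr)$, i.e.\ with the variation \emph{weakening} the estimate (this weaker form is what Thompson proves and is all that the later computation, via \eqref{eq: coiche vareps gamma}, actually uses); to obtain the stated ``$+2D$'' you would need an extra gain of $2n(1-\gamma)^2D$ that the argument does not supply. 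Moreover, when $\operatorname{Var}(\psi,2\varepsilon)$ is tiny (e.g.\ $\psi$ constant, $D=0$) the gap/remainder contributions of size $\|\psi_{5\varepsilon/4}\|\cdot o(n)$ cannot be absorbed by $nD$ at all; they must be absorbed by the $\gamma$-slack using the growth conditions on $N_k$ and $\hat n_k$. Second, the prefix-uniqueness step: with the constants of this construction, two atoms whose codings first differ at an early stage $i$ are only guaranteed, by \eqref{eq:500} and Lemma \ref{lemma:11}, to satisfy $d_n(z,z')>\tfrac{3\varepsilon}{4}-\varepsilon/2^{i}$, which is below the diameter $\varepsilon$ of $B_n(q,\varepsilon/2)$, so ``otherwise $d_n(z,z')>\varepsilon$'' is not justified for small $i$ (it is fine for differences at late stages, in particular within stage $k+1$, where the $9\varepsilon/8$-separation of $\mathcal S_{k+1}$ minus shadowing errors exceeds $\varepsilon$). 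The failure at the finitely many early stages only costs a bounded multiplicative constant, which Lemma \ref{lem:pressure distribution} tolerates through its constant $K$, but this must be said explicitly: as written, your uniqueness claim is false for early stages, and the constant-free inequality is precisely what you set out to prove.
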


	In order to conclude our proof we need the following fact which is sometimes refereed to as the generalized \emph{Pressure Distribution Principle}  (see \cite[Proposition 3.2]{MR2765447}). Observe that for this result, the measures involved do not need to be invariant by $f$, as it is the case of the measures $\mu_k$ and $\eta$ obtained in the previous constructions.

	\begin{lemma}\label{lem:pressure distribution}
		Let $f: X \to X$ be a continuous transformation and $Z \subseteq X$ be an arbitrary Borel set. Suppose that there exists $\varepsilon >0$ and $s \geqslant 0$ such that one can find a sequence of Borel probability measures $\mu_k$, a constant $K>0$ and an integer $N$ satisfying
		\[
		\limsup _{k \rightarrow \infty} \mu_k\left(B_n(x,\varepsilon )\right) \le  K \exp \left(-n s+\sum_{i=0}^{n-1} \psi\left(f^i (x)\right)\right)
		\]
		for every ball $B_n(x,\varepsilon)$ such that $B_n(x,\varepsilon)\cap Z\neq \emptyset$ and $n\geq N$. Furthermore, assume that at
		least one limit measure $\eta$ of the sequence $\mu_k$ satisfies $\eta(Z)>0$. Then $m(Z, f, \psi , \varepsilon)>s$.
	\end{lemma}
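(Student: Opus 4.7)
The proof is a standard ``pressure distribution'' argument in the style of Bowen--Pesin--Thompson; the hypotheses on $\{\mu_k\}$ encode precisely the exponential decay of measures of dynamical balls that must be transferred to the limiting measure $\eta$, and then converted into a lower bound for the Carath\'eodory-type sums defining $m$.

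First, I would reduce to estimating the sum over an arbitrary admissible cover. Fix any $N' \geq N$ and any countable cover $\Gamma = \{B_{n_i}(x_i,\varepsilon)\}_{i \in I}$ of $Z$ with $n_i \geq N'$. Without loss of generality I may discard every ball that does not meet $Z$, so that every remaining ball satisfies the standing hypothesis. Next, I would push the exponential decay estimate up to $\eta$: since the dynamical metric $d_{n_i}$ generates the same topology as $d$, each ball $B_{n_i}(x_i,\varepsilon)$ is open in $X$, and the Portmanteau theorem applied along the subsequence realizing $\mu_k \to \eta$ yields
\[
\eta\bigl(B_{n_i}(x_i,\varepsilon)\bigr) \leq \liminf_{k\to\infty} \mu_k\bigl(B_{n_i}(x_i,\varepsilon)\bigr) \leq \limsup_{k\to\infty} \mu_k\bigl(B_{n_i}(x_i,\varepsilon)\bigr) \leq K\,\exp\bigl(-s n_i + S_{n_i}\psi(x_i)\bigr).
\]
Replacing $S_{n_i}\psi(x_i)$ by the larger quantity $\sup_{y \in B_{n_i}(x_i,\varepsilon)} S_{n_i}\psi(y)$ only weakens the inequality, so it brings the bound into the exact form of the summand in the definition of $m(Z,\psi,s,N',\varepsilon)$.

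Third, I would exploit the positivity of $\eta(Z)$ to obtain a cover-independent lower bound for the $m$-sum. By countable subadditivity,
\[
0 < \eta(Z) \leq \sum_{i \in I} \eta\bigl(B_{n_i}(x_i,\varepsilon)\bigr) \leq K \sum_{i \in I} \exp\Bigl(-s n_i + \sup_{y \in B_{n_i}(x_i,\varepsilon)} S_{n_i}\psi(y)\Bigr).
\]
Taking the infimum over all such $\Gamma$ gives $m(Z,\psi,s,N',\varepsilon) \geq \eta(Z)/K > 0$, and since this holds for every $N' \geq N$, I pass to the limit $N' \to \infty$ to obtain $m(Z,\psi,s,\varepsilon) \geq \eta(Z)/K > 0$. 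By the very definition $m(Z,f,\psi,\varepsilon) = \sup\{s' : m(Z,\psi,s',\varepsilon) = +\infty\} = \inf\{s' : m(Z,\psi,s',\varepsilon) = 0\}$, a strictly positive value of $m(Z,\psi,s,\varepsilon)$ forces $m(Z,f,\psi,\varepsilon) \geq s$, which is the desired conclusion.

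The argument is almost entirely routine once the exponential-decay estimate of Lemma \ref{lem: decay} is available; the only subtlety to flag is a cosmetic notational one, namely that the exponent defining $m(Z,\psi,s,N,\varepsilon)$ carries a factor $|\log\varepsilon|$ in front of the Birkhoff sum of $\psi$, whereas the hypothesis of the lemma is phrased directly in terms of $S_n\psi$. This is harmless: the lemma is applied with $\psi$ replaced by the rescaled potential $\psi_{5\varepsilon/4}=|\log(5\varepsilon/4)|\psi$, so the absorbed $|\log\varepsilon|$ factor is already built into the function fed to the principle, and the computation above applies verbatim. Thus the only genuine work is the measure-theoretic transfer via Portmanteau, and the hardest step conceptually---producing a measure $\eta$ with $\eta(Z)>0$ and with the required decay for $\mu_k$---has already been carried out in the construction of $F$ and $\eta$.
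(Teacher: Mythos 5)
Your argument is correct and is essentially the canonical proof of this principle: the paper itself gives no proof, citing Thompson's Proposition 3.2, whose argument is exactly your route (Portmanteau applied to the open dynamical balls along the subsequence converging to $\eta$, countable subadditivity over an arbitrary admissible cover after discarding balls missing $Z$, the uniform lower bound $m(Z,\psi,s,N',\varepsilon)\geq \eta(Z)/K>0$, and monotonicity in the parameter $s$); your reading of the $|\log\varepsilon|$ normalisation, namely that the lemma is fed the already rescaled potentials $\psi_\varepsilon$, is also the intended one. The one point to flag is the conclusion: what your argument yields, and all that these hypotheses can yield, is the non-strict inequality $m(Z,f,\psi,\varepsilon)\geq s$, whereas the statement asserts $m(Z,f,\psi,\varepsilon)>s$. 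The strict form is in fact not deducible: for $X=Z$ a single fixed point, $\psi=0$, $\mu_k=\eta=\delta_{x_0}$, $s=0$, $K=1$, all hypotheses hold while $m(Z,f,0,\varepsilon)=0$. This is harmless for the paper, since Thompson states the principle with $\geq$ and the subsequent chain of estimates only uses $\geq$, but you should record your conclusion as $\geq s$ rather than presenting it as the stated strict inequality.
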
 
	
	Combining Lemmas \ref{lem: eta in F}, \ref{lem: decay} and \ref{lem:pressure distribution} we get that
	\[m(F, f, \psi_{\varepsilon/2} ,  \varepsilon/2)>(1-\gamma)^2(C-5\gamma+2D).\]
	On the other hand, by Lemma \ref{lemma: F_subset_K_alpha} it follows that
	\[m( K_\alpha, f, \psi_{\varepsilon/2} ,  \varepsilon/2)\geq m(F, f, \psi_{\varepsilon/2} ,  \varepsilon/2).\]
	Consequently, using \eqref{eq: choice varepsilon for P}, \eqref{eq: coiche vareps gamma} and \eqref{eq:escolha de xi}, we obtain that
\begin{align*}
			\mathrm{\overline{mdim}_M^B}\,(K_\alpha,f,d,\psi) +\gamma &\ge  \frac{m( K_\alpha, f, \psi_{\varepsilon/2} ,  \varepsilon/2)}{|\log \varepsilon/2|}\\
			&\ge \frac{(1-\gamma)^2(C-5\gamma+2D)}{|\log \varepsilon/2|}\\
			&=(1-\gamma)^2\frac{|\log 5\varepsilon|}{|\log \varepsilon/2|}\frac{h_{\mu}(f,\xi)+\int \psi_{5\varepsilon}d\mu-5\gamma}{|\log 5\varepsilon|}\\
			&\phantom{=}  -(1-\gamma)^2\left|\frac{\log 4 \int\psi d\mu-2  (\log5\varepsilon/4)\operatorname{Var}(\psi, 2\varepsilon )}{\log \varepsilon/2}\right|\\
			& \ge(1-\gamma)^3 \mathrm {H_\varphi\overline{mdim}_M}\,(f,\alpha,d,\psi)-2\gamma.
	\end{align*}
	Thus, since $\gamma >0$ is arbitrary, we conclude that
	\[\mathrm{\overline{mdim}_M^B}\,(K_\alpha,f,d,\psi)\geq \mathrm{H_\varphi\overline{mdim}_M}\,(f,\alpha,d,\psi).\]

	In order to establish the second lower bound for $\mathrm{\overline{mdim}_M}\,(K_\alpha,f,d,\psi)$, we fix $\gamma>0$ and let $\mu\in\mathcal M_f(X,\varphi,\alpha)$ be so that 
	\begin{align*}
		H(\mu)+ \int\psi d\mu \geq\sup\left\{H(\nu)+\int_X \psi\ d\nu : \nu \in M_f(X,\varphi,\alpha)\right\}-\gamma.
	\end{align*}
	Moreover, let $(\mu_\varepsilon)_\varepsilon\subset \mathcal M(\mu)$ be such that 
	\begin{align*}
		\limsup_{\varepsilon\to0}\frac{\inf_{|\xi|<\varepsilon}h_{\mu_\varepsilon}(f,\xi)}{|\log\varepsilon|}+ \int\psi\ d\mu \geq H(\mu)+ \int\psi\ d\mu-\gamma
	\end{align*}
	and take $\varepsilon=\varepsilon(\gamma)>0$ for which 
	\begin{align*}
		\frac{\inf_{|\xi|<5\varepsilon}\left( h_{\mu_{5\varepsilon}}(f,\xi) +\int\psi_{5\varepsilon}\ d\mu\right)-5\gamma}{|\log5\varepsilon|} \geq  H(\mu)+ \int\psi\ d\mu-2\gamma,
	\end{align*}
	\begin{equation*}\label{eq:choice varepsilon for h}
		\frac{m( K_\alpha, f, \psi_{\varepsilon/2} ,  \varepsilon/2)}{|\log \varepsilon/2|}\leq \mathrm{\overline{mdim}_M}\,(K_\alpha,f,d,\psi) +\gamma,
	\end{equation*}
	and
	\begin{equation*}
		\frac{|\log 5\varepsilon|}{|\log \varepsilon/2|}\geq (1-\gamma) \; \text{ and }\; \left| \frac{\log 4 \int\psi d\mu_\varepsilon-2  (\log5\varepsilon/4)\operatorname{Var}(\psi, 2\varepsilon )}{\log \varepsilon/2}\right|<\gamma.
	\end{equation*}
	Then, proceeding as we did above we conclude that for some partition $\xi$ of $X$,
\begin{align*}
			\mathrm{\overline{mdim}_M^B}\,(K_\alpha,f,d,\psi) +\gamma &\ge  \frac{m( K_\alpha, f, \psi_{\varepsilon/2} ,  \varepsilon/2)}{|\log \varepsilon/2|}\\
			&\ge \frac{(1-\gamma)^2(h_{\mu_{5\varepsilon}}(f,\xi)+\int \psi_{5\varepsilon/4} d\mu-5\gamma+2  \operatorname{Var}(\psi_{5\varepsilon/4}, 2\varepsilon ))}{|\log \varepsilon/2|}\\
			&=(1-\gamma)^2\frac{|\log 5\varepsilon|}{|\log \varepsilon/2|}\frac{h_{\mu_{5\varepsilon}}(f,\xi)+\int \psi_{5\varepsilon}d\mu-5\gamma}{|\log 5\varepsilon|}\\
			&\quad\quad-(1-\gamma)^2\left|\frac{\log 4 \int\psi d\mu-2  (\log5\varepsilon/4)\operatorname{Var}(\psi, 2\varepsilon )}{\log \varepsilon/2}\right|\\
			& \ge(1-\gamma)^3\sup\left\{H(\nu)+\int_X \psi\ d\nu:  \nu \in M_f(X,\varphi,\alpha)\right\}-3\gamma.
	\end{align*}
	Consequently, since $\gamma>0$ is arbitrary, it follows that
	\[ \mathrm{\overline{mdim}_M^B}\,(K_\alpha,f,d,\psi)\geq \sup\left\{H(\nu)+\int_X \psi\ d\nu:  \nu \in M_f(X,\varphi,\alpha)\right\},\]
	giving us the second lower bound for $\mathrm{\overline{mdim}_M^B}\,(K_\alpha,f,d,\psi)$.

	\subsection{Conclusion of the proof}\label{sec: conclusion of proof}
	In order to finish the proof of Theorem \ref{thm:main1}, all that is left to do is to show that 
	\[
	\begin{split}
		\sup\left\{H(\mu)+\int_X \psi\ d\mu:  \mu \in \mathcal{M}_f(X,\varphi,\alpha)\right\}=\sup\left\{H_\delta^K(\mu)+\int_X \psi\ d\mu:\mu\in \mathcal M_f(X,\varphi , \alpha)\right\}.
	\end{split}
	\]
	By the proof of \cite[Theorem 9]{Shi}, given $\delta\in(0,1)$ we have that for any  $\nu\in\mathcal{M}_f^e(X)$ and any finite measurable partition $\xi$ with diameter smaller than $\frac{\varepsilon}{4}$, 
	\[
	h_{\nu}^K(\varepsilon,\delta)\leq h_{\nu}(f,\xi).
	\]
	Consequently, using property \eqref{convexity-property} and an analogous property for the metric entropy of a measurable partition together with the definitions of $H_\delta^K(\cdot)$ and $H(\cdot)$ we obtain 
	\[
	\begin{split}
		\sup\left\{H_\delta^K(\mu)+\int_X \psi\ d\mu:\mu\in \mathcal M_f(X,\varphi , \alpha)\right\}\leq\sup\left\{H(\mu)+\int_X \psi\ d\mu:  \mu \in \mathcal{M}_f(X,\varphi,\alpha)\right\}.
	\end{split}
	\]
	
	For the converse inequality, again by the proof of \cite[Theorem 9]{Shi} we get 
	\begin{align*}
		\inf_{|\xi|<\varepsilon} h_\nu(f,\xi)\leq h_\nu^K\left(\frac{\varepsilon}{4},\delta\right),
	\end{align*}
	for any $\delta\in(0,1)$ and any $\mu\in\mathcal M_f(X)$, which implies the desired conclusion. This completes the proof of Theorem \ref{thm:main1}.

	\section{Application to suspension flows}
	Inspired by the results in \cite{Tho}, we apply our main theorem to the context of suspension flows. We start by recalling the definition of metric mean dimension for flows.
	
	\subsection{Metric mean dimension for flows}
	Given a compact metric space $(X,d)$, let $g_t\colon X\to X$, $t\in \R$ be a continuous flow which we sometimes simply denote by $g$. In particular, it satisfies $g_{t+s}=g_t\circ g_s$ for all $t,s \in \mathbb{R}$. Similarly to what we did in the case of discrete time dynamics, we define the \emph{Bowen metric for the flow $g_t$} as
	\begin{align*}
		d_t(x,y)=\max_{s\in [0,t]} d(g_s (x), g_s (y))
	\end{align*}
	and consider the $(t,\epsilon)$-ball around $x$ given by  
	\begin{align*}
		B_t(x,\epsilon)&=\{y\in X: d_t(x,y)<\epsilon\}.
	\end{align*}
	Clearly, since $g_t$ is continuous, $B_t(x,\epsilon)$ is an open set.

	Given a set $Z\subset X$, let us consider
	\[
	M(Z,g, s, T,\varepsilon)=\inf_{\Gamma}\left\{\sum_{i\in I}\exp{\left(-st_i\right)}\right\},
	\]
	where the infimum is taken over all covers $\Gamma=\{B_{t_i}(x_i,\varepsilon)\}_{i\in I}$ of $Z$ with $t_i\geq T$. Moreover, we consider
	\[
	M(Z,g, s,\varepsilon)=\lim_{T\to\infty}M(Z,g, s, T,\varepsilon)
	\]
	and define
	\begin{align}
		M\Big(Z,g,\varepsilon\Big)&=\inf\{s:M(Z,g, s,\varepsilon)=0\}
		\nonumber\\
		&=\sup\{s:M(Z,g, s,\varepsilon)=+\infty\}.
	\end{align}
	See \cite{Pesin} for details. Then, the \textit{upper Bowen metric mean dimension of $g$ on $Z$} is defined as
		\begin{equation*}
			\mathrm{\overline{mdim}_M^B}\,\Big(Z,g,d\Big)=\limsup_{\varepsilon\to 0}\frac{M\Big(Z,g ,\varepsilon\Big)}{|\log \varepsilon|}.
	\end{equation*}

	\subsection{Suspension flows}
	Let $f\colon X\to X$ be a homeomorphism of a  compact metric space $(X,d)$. Given a continuous roof function $\rho \colon X\to(0,\infty)$, we consider the \emph{suspension space} or \emph{quotient space} given by 
	\[X_\rho=\{(x,s)\in X\times \mathbb R: 0\leq s\leq \rho(x)\}/\sim ,\]
	where $\sim$ is the equivalence relation given by $(x,\rho(x))\sim (f(x),0)$ for all $x\in X$. The  \emph{suspension of $(X,d)$ with roof function $\rho$} is the semiflow $\Psi=\{g_t\}_{t\in\mathbb R}$ on $X_\rho$ defined locally by
	$g_t(x,s)=(x,s+t)$. 
	Given a continuous function $\Phi\colon X_\rho\to \mathbb R$, we associate to it the continuous function $\varphi\colon X\to\mathbb R$ defined by $\varphi(x)=\int_0^{\rho(x)}\Phi(x,t)\ dt$. Then, by \cite[Lemma 5.3]{MR2765447}
	we have that 
	\[
	\liminf_{T\to\infty}\frac{1}{T}\int_0^T\Phi(g_t(x),t)\ dt=\liminf_{n\to\infty}\frac{S_n\varphi(x)}{S_n\rho(x)}\]
	and
	\[
	\limsup_{T\to\infty}\frac{1}{T}\int_0^T\Phi(g_t(x),t)\ dt=\limsup_{n\to\infty}\frac{S_n\varphi(x)}{S_n\rho(x)}.\]
	Moreover, to any $\mu\in \mathcal M_f(X)$ we associate the measure $\mu_\rho$ on $X_\rho$ given by 
	\[\int_{X_\rho}\Phi \ d\mu_\rho=\frac{\int_{X}\varphi \ d\mu}{\int_X \rho \ d\mu}, \;\text{ for every }  \Phi\in C(X_\rho,\R),\]
	where $\varphi$ is defined as above. It is well known that all the $\Psi $-invariant measures can be obtained via this construction, that is, 
	\[\mathcal M_\Psi(X_\rho)=\left\{\mu_\rho:\mu\in \mathcal{M}_f(X)\right\}.\]
	Furthermore, by Abramov's Theorem \cite{Abra} we have that $h_{\mu_\rho}(\Psi)=\displaystyle\frac{h_\mu(f)}{\int_X\rho(x)\ d\mu}$ and hence 
	\[h_\mathrm{{top}}(\Psi)=\sup\left\{h_{\mu_\rho}(\Psi):\mu\in\mathcal M_f(X)\right \}=\sup\left\{\displaystyle\frac{h_\mu(f)}{\int_X\rho(x)\ d\mu}:\mu\in\mathcal M_f(X) \right\}\]
	where $h_\mathrm{{top}}(\Psi)$ denotes the topological entropy of the flow $\Psi$. Note that we use $h_\mathrm{{top}}(Z,\Psi)$ to denote the topological entropy of a subset $Z$ for the flow  $\Phi$, as introduced by \cite{MR2765447}. In particular, we have $h_\mathrm{{top}}(\Psi)=h_\mathrm{{top}}(X,\Psi)$.
	
	Finally, given $\alpha \in \R$ we consider the level set
	\begin{align*}
		X_\rho(\Phi,\alpha)&=\left\{(x,s)\in X_\rho: \lim_{T\to\infty}\frac{1}{T}\int_0^T\Phi(g_t(x),t)\ dt=\alpha\right\}\\
		&=\left\{(x,s)\in X_\rho: \lim_{n\to\infty}\frac{S_n\varphi(x)}{S_n\rho(x)}=\alpha\right\}.
	\end{align*}
	In \cite[Theorem 4.2]{Tho}, it was proved that for $\Phi\in C(X_\rho,\R)$, 
	\[h_{\mathrm{top}}(X_\rho(\Phi,\alpha),\Psi)=\sup\left\{h_{\mu}(\Psi):\mu\in \mathcal M_\Psi(X_\rho)\text{ and }\int\Phi\ d\mu=\alpha\right\}.\]

	\subsection{Metric mean dimension of suspension flows}
	We start with a version of \cite[Theorem 4.1]{Tho} in our setting. 
	
	\begin{theorem}\label{thm:10101010}
		Let $(X,d)$ be a compact metric space, $f\colon X\to X$ be a continuous map with the specification property and $\varphi,\psi, \rho \in C(X,\R) $ with $\rho>0$. Given $\alpha\in \R$, consider $X(\varphi,\rho,\alpha)=\left\{x\in X:\lim_{n\to\infty}\frac{S_n\varphi(x)}{S_n\rho(x)}=\alpha\right\}$. If $\alpha$ is so that $X(\varphi,\rho,\alpha)\not=\emptyset$, then
		\[\mathrm{\overline{mdim}_M^B}\,(X(\varphi,\rho,\alpha),f,d,\psi)=\sup\left\{H(\mu)+\int_X\psi d\mu: \; \mu\in\mathcal M_f(X)\text{ and }\frac{\int_{X}\varphi \ d\mu}{\int_X \rho \ d\mu}=\alpha\right\}.\]
	\end{theorem}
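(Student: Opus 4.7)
The plan is to reduce Theorem \ref{thm:10101010} directly to Theorem \ref{thm:main1} via a change of potential. Consider the auxiliary continuous function
\[
\tilde\varphi := \varphi - \alpha\rho \in C(X,\R).
\]
Since $\rho$ is continuous on the compact set $X$ with $\rho>0$, we have $0 < c := \min_X \rho \leq \rho \leq \|\rho\|_\infty$, so $cn \leq S_n\rho(x) \leq n\|\rho\|_\infty$ for every $x\in X$ and $n\geq 1$. Combining this with the identity $S_n\tilde\varphi(x) = S_n\varphi(x) - \alpha S_n\rho(x)$ yields the equivalences
\[
\frac{S_n\varphi(x)}{S_n\rho(x)} \to \alpha \;\iff\; \frac{S_n\tilde\varphi(x)}{S_n\rho(x)} \to 0 \;\iff\; \frac{1}{n}S_n\tilde\varphi(x) \to 0,
\]
where the second equivalence follows from the two-sided bound on $S_n\rho(x)/n$. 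Hence $X(\varphi,\rho,\alpha)$ coincides with the classical Birkhoff level set $K_0$ associated to the potential $\tilde\varphi$ as in \eqref{def-target-set}, and the standing assumption $X(\varphi,\rho,\alpha)\neq\emptyset$ is exactly the hypothesis $K_0\neq\emptyset$ needed to invoke Theorem \ref{thm:main1}.

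Applying Theorem \ref{thm:main1} to the potential $\tilde\varphi$ with level $\alpha=0$ immediately gives
\[
\mathrm{\overline{mdim}_M}\bigl(X(\varphi,\rho,\alpha),f,d,\psi\bigr) = \sup\left\{H(\mu) + \int_X \psi\,d\mu : \mu\in\mathcal M_f(X,\tilde\varphi,0)\right\}.
\]
It remains only to translate the constraint on $\mu$. By definition, $\mu\in\mathcal M_f(X,\tilde\varphi,0)$ means $\int_X\tilde\varphi\,d\mu = 0$, i.e., $\int_X\varphi\,d\mu = \alpha\int_X\rho\,d\mu$. Because $\rho>0$ forces $\int_X\rho\,d\mu > 0$ for every $\mu\in\mathcal M_f(X)$, this is equivalent to the ratio condition $\int_X\varphi\,d\mu\big/\int_X\rho\,d\mu = \alpha$. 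Thus $\mathcal M_f(X,\tilde\varphi,0)$ coincides with the set of invariant measures appearing in the statement, and the identification of the two suprema is complete.

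The only substantive point is the observation that the ratio-type level set $X(\varphi,\rho,\alpha)$ collapses to an ordinary Birkhoff level set for the shifted potential $\varphi-\alpha\rho$ once the roof function $\rho$ is bounded away from $0$; the rest is bookkeeping. Consequently there is no genuine obstacle, as all the heavy analytic work (construction of the approximating Moran-like fractal, the pressure distribution principle, and the partition/Katok equivalence) is already contained in Theorem \ref{thm:main1}.
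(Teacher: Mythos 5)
Your proposal is correct, but it follows a genuinely different route from the paper. You reduce Theorem \ref{thm:10101010} to Theorem \ref{thm:main1} by the potential shift $\tilde\varphi=\varphi-\alpha\rho$: since $\rho$ is continuous and positive on the compact space $X$, it is bounded between $\inf\rho>0$ and $\|\rho\|_\infty$, so your two equivalences are valid and give exactly $X(\varphi,\rho,\alpha)=K_0$ for the potential $\tilde\varphi$, while $\int\rho\,d\mu\geq\inf\rho>0$ turns the constraint $\int\tilde\varphi\,d\mu=0$ into the ratio condition $\int\varphi\,d\mu/\int\rho\,d\mu=\alpha$; applying Theorem \ref{thm:main1} at level $0$ then yields the statement with no further work. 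The paper instead re-runs the entire proof of Theorem \ref{thm:main1} with two modifications: it alters Lemma \ref{lem: approx} so that item (c) is replaced by closeness of the ratios $\int\varphi\,d\nu_k/\int\rho\,d\nu_k$, and it replaces the sets in \eqref{def:sets} by their ratio analogues $\bigl\{x:\bigl|S_n\varphi(x)/S_n\rho(x)-\int\varphi\,d\nu_i^k/\int\rho\,d\nu_i^k\bigr|<\delta_k\ \forall n\geq\ell_k\bigr\}$, whose large measure is guaranteed by Hopf's ratio ergodic theorem, and then repeats the fractal construction and pressure distribution argument. Your reduction is shorter, uses Theorem \ref{thm:main1} purely as a black box, and is a standard device for ratio (conditional) multifractal spectra; the paper's approach, modeled on Thompson's treatment of suspension flows, exposes the intermediate objects (the approximating measures, the sets $Y_i(k)$, the Moran construction) in the ratio setting, which is what its subsequent flow applications quote, but for the theorem as stated your argument is complete and correct.
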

	\begin{proof}
		Observe initially that the proof of Lemma \ref{lem: approx} may be slightly changed in order to obtain a measure $\nu_k$ so that instead of getting the item (c) in the aforementioned lemma we obtain 
		\[\left|\displaystyle\frac{\int_X \varphi\; d\nu_k}{\int_X \rho\; d\nu_k}-\frac{\int_X \varphi\; d\mu}{\int_X \rho\; d\mu}\right|<\delta_k.\]
		Moreover, by Hopf’s ratio ergodic theorem it is possible to replace the family of sets given in \eqref{def:sets} by the following family of sets 
		\[Y_{i}(k)=\left\{x\in X: \left|\frac{S_n\varphi(x)}{S_n\rho(x)}-\frac{\int_X\varphi\; d\nu^k_i}{\int_X\rho\; d\nu^k_i}\right|<\delta_k\;\; \forall \; n\geq \ell_k\right\},\]
		which for $\ell_k$ large enough has $\nu_i^k$-measure bigger than $1-\gamma$ for every $k\in\mathbb N$ and $i\in \{1,\dots,j(k)\}$. Then, after these two small changes, we can follow the steps of the proof of Theorem \ref{thm:main1} to obtain the desired result.
	\end{proof}
	
	In order to talk about the metric mean dimension of a suspension flow,  we obviously need a metric in the suspension space. Thus we will consider $X_\rho$ endowed with the \emph{Bowen-Walters metric} $\tilde d$ which is induced by $d$ on $X_\rho$ (see \cite{BW,Gu} and \cite[Section 2.2]{Barreira2} for more details). For the sake of completeness as well as the convenience of the later proof,  we recall the construction. First we assume $\rho\equiv 1$  and define the metric $\tilde{d}_{1}$ on the space $X_1$. For $x, y \in X$ and $0 \leq t \leq 1$, define the length of the horizontal segment $((x, t),(y, t))$ by
	\[
	d_h((x, t),(y, t))=(1-t) d(x, y)+t d(f x, f y) .
	\]
	Then, for $(x, t),(y, s) \in  X_1$ which are on the same orbit, define the length of the vertical segment $((x, t),(y, s))$ by
	\[
	d_v((x, t),(y, s))=\inf \left\{|r|: g_r(x, t)=(y, s)\right\} .
	\]
	Finally, for any $(x, t),(y, s) \in X_1$, define the distance $\tilde{d}_{1}((x, t),(y, s))$ to be the infimum of the lengths of all paths between $(x, t)$ and $(y, s)$ consisting of a finite concatenation of horizontal and vertical segments. 
	
	Now we consider the case when $\rho: X \rightarrow(0, \infty)$ is a general continuous function. For this purpose, observe that there is a natural homeomorphism $i_\rho:  X_1 \to X_\rho$ given by $(x, t) \mapsto(x, t \rho(x))$. Using this homeomorphism we define the Bowen-Walters metric $\tilde{d}$ on $X_\rho$ as 
	\begin{equation}\label{eq:BW-metric}
		\tilde{d}((x, t),(y, s))=\tilde{d}_{1}\left(i_\rho(x, t),i_\rho(y, s)\right)=\tilde{d}_{1}\left((x, t\rho(x),(y, s\rho(y)\right) .
	\end{equation}

	Let $(x,s)\in X_\rho$ with $0\leq s<\rho(x)$. Following \cite{MR2765447}, we consider the  \emph{horizontal segment of $(x,s)$} given by
	$\{(y,t): y\in X, \ 0\leq t<\rho(x),\ t=\rho(y)s\rho(x)^{-1}\}$ and the \emph{horizontal ball of radius $\varepsilon$ at
		$(x,s)$} defined by 
	\[B^H\left((x,s),\varepsilon\right):=\left\{\left(y,\frac{s}{\rho(x)}\rho(y)\right):\left(1-\frac{s}{\rho(x)}\right)d(x,y)+\frac{s}{\rho(x)}d(f(x),f(y))<\varepsilon\right\}.\]
	We also define for $(x,s)\in X_\rho$ and $T,\varepsilon>0$,
	\[
	B((x,s),\varepsilon)=\bigcup_{t:|s-t|<\varepsilon}B^H((x,t),\varepsilon)\]
	and
	\[
	B_T((x,s),\varepsilon)=\bigcap_{t=0}^Tg_{-t}(B(g_t(x,s),\varepsilon)).
	\]
	We emphasize that $ B((x,s),\varepsilon)$ is not necessarily a ball in the Bowen-Walters metric. But, on the other hand, for every $\varepsilon>0$ there exist constants $C_1,C_2>0$ such that the following holds (see Section 5.4 in \cite{MR2765447}):
	\begin{itemize}
		\item the ball in the Bowen-Walters metric with center $(x,s)$ and radius $C_1\varepsilon$ is a subset of $B((x,s),\varepsilon)$;
		\item every subset of $X_\rho$ with diameter $\varepsilon$ in the Bowen-Walters metric is contained is some set of the form $B((x,s),C_2\varepsilon)$, for $\varepsilon$ sufficiently small;
		\item $B((x,s),\varepsilon)$ is open with respect to the topology induced by Bowen-Walters metric; 
		\item $\displaystyle\lim_{\varepsilon \to 0} \sup_{(x,s)\in X_\rho}\text{diam}(B((x,s),\varepsilon))=0$, where $\text{diam}(A)$ is the diameter of the subset $A$ of $X_\rho$ with respect to Bowen-Walters metric.
	\end{itemize}
	These properties will allow us to compute the metric mean dimension  with potential of $X_\rho $ endowed with the Bowen-Walters metric by using open covers given by open sets of the form $B((x,s),\varepsilon)$. 
	
	Fix $K=4\ \|\rho\|\slash \inf \rho$. Combing Lemmas 5.6 and 5.7 in \cite{MR2765447} we can conclude that for $\varepsilon>0$ satisfying $K\varepsilon<\inf \rho$, if $|s|<\varepsilon$ and  $S_n\rho(x)\leq T<S_{n+1}\rho(x)$, then \begin{align}
		\label{eq:irre-thomp}
		B_T((x,s),\varepsilon
		)\subset B_n(x,K\varepsilon)\times (-K\varepsilon,K\varepsilon).
	\end{align}

	\begin{theorem}\label{thm2020202}
		Let $(X,d)$ be a compact metric space and $f\colon X\to X$ be a homeomorphism. Let $\rho\colon  X\to(0,\infty)$ be a continuous function and $(X_\rho,\Psi)$ be the corresponding suspension flow over $X$. Given $Z\subset X$, define $Z_\rho=\{(z,s):z\in Z\text{ and }0\leq s <\rho(z)\}$. Let $\beta\in \mathbb R$ be the unique solution of the equation   $\mathrm{\overline{mdim}_M^B}\,(Z, f,d,-t\rho)=0$. Then $\mathrm{\overline{mdim}_M^B}\,(Z_\rho,\Psi, \tilde d)\geq \beta$.
	\end{theorem}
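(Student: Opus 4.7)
The plan is to show, for every $t < \beta$, that $\mathrm{\overline{mdim}_M}(Z_\rho, \Psi, \tilde d) \geq t$, by lifting a large discrete separated set in $Z$ (whose existence is guaranteed by $\mathrm{\overline{mdim}_M}(Z, f, d, -t\rho) > 0$) to $Z_\rho$ via the basepoint embedding $x \mapsto (x, 0)$, and then deducing a lower bound on $M(Z_\rho, \Psi, s, \varepsilon)$ by a mass-distribution argument.

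Since $\rho > 0$, the function $t \mapsto \mathrm{\overline{mdim}_M}(Z, f, d, -t\rho)$ is non-increasing, and by the uniqueness of $\beta$ it is strictly positive for every $t < \beta$. Hence there are $\eta > 0$ and $\varepsilon_k \to 0$ with $P(d, f, -t\rho, \varepsilon_k, Z) > \eta |\log \varepsilon_k|$, which unpacks to the existence of $n_k \to \infty$ and $(n_k, \varepsilon_k)$-separated sets $E_k \subset Z$ satisfying $\sum_{x \in E_k} \exp(|\log\varepsilon_k| S_{n_k}(-t\rho)(x)) \geq \exp(n_k \eta |\log\varepsilon_k|)$. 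Setting $T_k := \min_{x \in E_k} S_{n_k}\rho(x) \geq n_k \inf\rho$ and bounding the sum above by its maximum term $|E_k|\exp(-t|\log\varepsilon_k| T_k)$ yields $|E_k| \geq \exp(|\log\varepsilon_k|(n_k \eta + t T_k))$. Lifting $E_k$ to $\tilde E_k := \{(x, 0) : x \in E_k\} \subset Z_\rho$, the key geometric claim is that $\tilde E_k$ is $(T_k, \varepsilon_k/c)$-separated in the Bowen-Walters metric for some constant $c > 0$ depending only on $\rho$: given $x_1 \neq x_2 \in E_k$ with $d(f^j x_1, f^j x_2) > \varepsilon_k$ at some $0 \leq j < n_k$, at flow-time $\tau = S_j\rho(x_1) \leq T_k$ one has $g_\tau(x_1, 0) = (f^j x_1, 0)$ while $g_\tau(x_2, 0)$ lies near a point of the form $(f^{j'} x_2, u)$ with $j'$ determined by $\tau$ and small $u$, and the Bowen-Walters distance between these two points is bounded below by a constant multiple of $\varepsilon_k$ via the horizontal-ball structure of $\tilde d$.

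Given any cover $\Gamma = \{B_{T_i}((x_i, s_i), \varepsilon_k/(2c))\}_{i \in I}$ of $Z_\rho$ with $T_i \geq T_k$, each ball has $d_{T_i}$-diameter at most $\varepsilon_k/c$, hence contains at most one point of $\tilde E_k$, so $|I| \geq |E_k|$. A Pressure-Distribution-type argument (in the spirit of Lemma~\ref{lem:pressure distribution}) applied with the normalized counting measure $\mu_k := |E_k|^{-1} \sum_{p \in \tilde E_k} \delta_p$ --- which satisfies $\mu_k(B_{T_i}(\cdot, \varepsilon_k/(2c))) \leq |E_k|^{-1} \leq \exp(-|\log\varepsilon_k|(n_k\eta + tT_k))$ --- then gives, for every $s < t|\log\varepsilon_k|$,
\[
\sum_i \exp(-sT_i) \gtrsim \exp\bigl(|\log\varepsilon_k| n_k\eta + (t|\log\varepsilon_k| - s) T_k\bigr) \longrightarrow +\infty
\]
as $k \to \infty$. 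Consequently $M(Z_\rho, \Psi, s, \varepsilon_k/(2c)) = +\infty$ for every $s < t|\log\varepsilon_k|$, and so $M(Z_\rho, \Psi, \varepsilon_k/(2c)) \geq t |\log\varepsilon_k|$. Dividing by $|\log(\varepsilon_k/(2c))| = |\log\varepsilon_k| + O(1)$ and taking the $\limsup$ in $k$ yields $\mathrm{\overline{mdim}_M}(Z_\rho, \Psi, \tilde d) \geq t$, and letting $t \to \beta^-$ finishes the proof.

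The main obstacle is the Bowen-Walters separation claim, which requires quantitative control of the mismatch between the return-times $S_j\rho(x_1)$ and $S_j\rho(x_2)$ so that the discrete separation at iterate $j$ survives as a flow separation at time $\tau$. A secondary technical point is ensuring that the mass-distribution argument delivers $\sum \exp(-sT_i) \to +\infty$ uniformly over all covers, including those using balls with widely varying $T_i$; this requires, rather than the naive uniform measure $\mu_k$, constructing a measure supported on $\bigcup_k \tilde E_k$ (or on a fractal limit set) whose ball decay matches $\exp(-sT)$ uniformly in $T$, in the spirit of the construction of the measure $\eta$ in Lemma~\ref{lem: eta in F} used in the proof of Theorem~\ref{thm:main1}.
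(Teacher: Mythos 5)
Your reduction to showing $\mathrm{\overline{mdim}_M}(Z_\rho,\Psi,\tilde d)\geq t$ for every $t<\beta$ is fine, and extracting large $(n_k,\varepsilon_k)$-separated sets $E_k\subset Z$ from positivity of $\mathrm{\overline{mdim}_M}(Z,f,d,-t\rho)$ is legitimate. The genuine gap is the step you yourself flag as ``secondary'': with the single-scale counting measure $\mu_k=|E_k|^{-1}\sum_{p\in\tilde E_k}\delta_p$, the bound $\mu_k(B_{T_i}(\cdot,\varepsilon_k/(2c)))\leq |E_k|^{-1}$ only matches the required decay $\exp(-sT_i)$ when $T_i\approx T_k$. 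A cover of $Z_\rho$ by flow balls is allowed to use lengths $T_i\gg T_k$; such a ball still contains at most one point of $\tilde E_k$, but it contributes only $\exp(-sT_i)$, which can be arbitrarily small, so the displayed lower bound on $\sum_i\exp(-sT_i)$ does not follow from $|I|\geq|E_k|$ and the conclusion $M(Z_\rho,\Psi,s,\varepsilon_k/(2c))=+\infty$ is unjustified. Your proposed repair --- a limit measure with ball decay $\exp(-sT)$ uniform in $T$, ``in the spirit of Lemma~\ref{lem: eta in F}'' --- is not available here: that construction in the proof of Theorem~\ref{thm:main1} glues orbit segments using the specification property, whereas the present theorem assumes only a homeomorphism and an arbitrary subset $Z$, so there is no mechanism to concatenate the unrelated sets $E_k$ across scales into a Moran-type fractal. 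There is also a quantitative mismatch in your single-scale setup: you take $T_k=\min_{x\in E_k}S_{n_k}\rho(x)$ for the counting estimate, but the containment \eqref{eq:irre-thomp} converts a flow ball of length $T$ centred near the base into a discrete ball $B_n(\cdot,K\varepsilon)$ with $n$ determined by $S_n\rho\leq T$, so one only gets $n\geq n_k$ (and hence a contradiction with $(n_k,\varepsilon_k)$-separation) when $T\gtrsim\max_x S_{n_k}\rho(x)$; the discrepancy $t(\max-\min)$ can be of order $n_k(\sup\rho-\inf\rho)$ and swamp the gain $n_k\eta$ (this particular point could be patched by pigeonholing $E_k$ on the value of $S_{n_k}\rho$, but as written it is another hole).

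The paper's proof avoids both problems by never passing through separated sets or measures: it takes an arbitrary cover $\{B_{t_i}((x_i,s_i),\varepsilon)\}$ of $Z_\rho$, keeps a subfamily covering $Z\times\{0\}$, uses \eqref{eq:irre-thomp} to project each flow ball to a discrete dynamical ball $B_{m_i}(x_i,K\varepsilon)$ with $S_{m_i}\rho(x_i)\leq t_i<S_{m_i+1}\rho(x_i)$, and then compares the weight $\exp(-\beta|\log\varepsilon|t_i)$ term by term with $\exp\bigl(\sup_{B_{m_i}(x_i,K\varepsilon)}S_{m_i}(-\beta\rho)_\varepsilon\bigr)$, so the whole sum dominates (a constant times) the Carath\'eodory outer sum $m(Z,-\beta\rho,0,m(\Gamma'),\varepsilon)$. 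Because that outer quantity is by definition an infimum over discrete covers with \emph{varying} lengths, covers of $Z_\rho$ with widely varying $t_i$ are handled automatically, and positivity of $m(Z,f,-\beta\rho,\varepsilon)/|\log\varepsilon|$ makes the sum at least $1$ for $T$ large. If you want to salvage your route, you would have to work with this Bowen--Carath\'eodory outer measure on the base rather than with fixed-length separated sets; as it stands, the core estimate of your argument does not go through.
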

	\begin{proof}
		We start observing that, by \cite[Proposition 3.6]{Chen}, the map $$t\mapsto \mathrm{\overline{mdim}^B_M}\,(Z,f, d,-t\rho)$$ is decreasing and continuous and, moreover, there exists a unique value of $t$ for which $\mathrm{\overline{mdim}_M^B}\,(Z,f, d,-t\rho)=0$. Thus, in order to get our result it is enough to prove that if $\beta>0$ is so that $\mathrm{\overline{mdim}_M^B}\,(Z,f, d,-\beta\rho)>0$, then $\mathrm{\overline{mdim}_M^B}\,(Z_\rho,\Psi,\tilde d)\geq \beta$. This is what we are going to do in the sequel.
		
		Let $\varepsilon>0$ be small enough such that $\eqref{eq:irre-thomp}$ holds and $\displaystyle\frac{m\,\Big(Z,f,-\beta \rho,\varepsilon\Big)}{|\log \varepsilon|}>0$. Observe that there are arbitrary small values of $\varepsilon>0$ satisfying these two conditions. Then, consider an open cover of $Z_\rho$ of the form $\Gamma=\{B_{t_i}((x_i,s_i),\varepsilon)\}_i$ with $t_i\geq T$ and $s_i\leq \varepsilon$ for all $i$. Since $Z\times \{0\}\subset Z_\rho$, we can take $\Gamma'$ a subcover of $\Gamma$ which covers $Z\times \{0\}$. Denote by $m_i\in \mathbb N$ the unique number so that $S_{m_i}\rho(x_i)\leq t_i<  S_{m_i+1}\rho(x_i)$ and consider
		$m(\Gamma')=\inf m_i$. By the definition of the $m_i$'s and the way the $t_i$'s were taken, we have that $m(\Gamma')\geq \|\rho\|^{-1}(T-\|\rho\|)$ and, consequently, $\displaystyle\lim_{T\to +\infty}m(\Gamma')=\infty$. 
		
		Let $\Gamma''=\{B_{m_i}(x_i,K\varepsilon):B_{t_i}((x_i,s_i),\varepsilon)\in\Gamma'\}$. By \eqref{eq:irre-thomp} it follows that the family given by the sets of the form $B_{m_i}(x_i,K\varepsilon)\times (-K\varepsilon ,K\varepsilon)$ covers $Z\times \{0\}$. In particular, $\Gamma''$ covers $Z$. Now we notice that 
		\begin{align}\label{ineq:ai-vai}
			&\sum_{B_{t_i}((x_i,s_i),\varepsilon)\in\Gamma'}\exp{\left(-\beta |\log\varepsilon| t_i\right)}\\ \nonumber
			&\geq
			\sum_{B_{t_i}((x_i,s_i),\varepsilon)\in\Gamma'}\exp{\left(-\beta S_{m_i+1}\rho_\varepsilon(x_i)\right)}\\ \nonumber
			&\geq 
			\sum_{B_{t_i}((x_i,s_i),\varepsilon)\in\Gamma'}\exp{\left(-\beta (S_{m_i}\rho(x_i)+\|\rho\|)_\varepsilon\right)}\\ \nonumber
			&\geq 
			\sum_{B_{m_i}((x_i,\varepsilon)\in\Gamma''}\exp{\left(-\beta \left(\inf_{y\in B_{m_i}(x_i,K\varepsilon)}S_{m_i}\rho(y)+\text{Var}(\rho,\varepsilon)+\|\rho\|\right)_\varepsilon\right)}\\\nonumber
			&= 
			\exp(-\beta(\text{Var}(\rho,\varepsilon)+\|\rho\|)_\varepsilon)
			\sum_{B_{m_i}(x_i,\varepsilon)\in\Gamma''}\exp{\left(\sup_{y\in B_{m_i}(x_i,K\varepsilon)}S_{m_i}(-\beta\rho)_\varepsilon(y)\right)}\\\nonumber
			&\geq
			\exp(-\beta(\text{Var}(\rho,\varepsilon)+\|\rho\|)_\varepsilon) m(Z,-\beta\rho,0,m(\Gamma'),\varepsilon).
		\end{align}   
		Thus, by taking $T$ sufficiently large (recall that this implies that $m(\Gamma')$ also gets large) and using the hypothesis $\displaystyle\frac{m\,\Big(Z,f,-\beta \rho,\varepsilon\Big)}{|\log \varepsilon|}>0$, we get that
		\[\exp(-\beta(\text{Var}(\rho,\varepsilon)+\|\rho\|)_\varepsilon) m(Z,-\beta\rho,0,m(\Gamma'),\varepsilon)\geq 1.\]
		Therefore, using \eqref{ineq:ai-vai} we conclude that
		\[\sum_{B_{t_i}((x_i,s_i),\varepsilon)\in\Gamma}\exp{\left(-\beta |\log \varepsilon |t_i\right)}\geq\sum_{B_{t_i}((x_i,s_i),\varepsilon)\in\Gamma'}\exp{\left(-\beta |\log \varepsilon | t_i\right)}\geq 1.\]
		Finally, since $\Gamma$ was taken arbitrary, we have that $M(Z_\rho,\Psi, \beta|\log \varepsilon|, T,\varepsilon)\geq 1$, which implies $M(Z_\rho,\Psi,\varepsilon)\geq \beta|\log \varepsilon |$. Since  $\varepsilon>0$ may be taken arbitrary small,
		$\mathrm{\overline{mdim}_M^B}\,(Z_\rho,\Psi, \tilde d)\geq \beta$ as claimed.  \end{proof}
	
	Our next result deals with the metric mean dimension of the level sets $X_\rho(\Phi,\alpha)$. It represents an extension of \cite[Theorem 4.2]{Tho} to the context of infinite entropy. But before we state the main result of this section, we need to define an analogous version of the map $H(\mu)$ for continuous time dynamics. We emphasize that there are several candidates to play the role of this map but we will consider only one of them. Namely, we will define the map $H(\mu)$ associated to a flow $\Psi=\{g_t\}$ as the map $H(\mu)$ associated to the time-one map $g_1$; we abuse notation and denote both maps in the same way.

	For the next result we assume that $\rho:X\to[0,\infty)$
	is so that the following condition holds: 
	there exists  $K_1>0$  such that for any $x,y\in X$,
	\begin{equation} \label{eq:important-condition}
		|S_n\rho(x)-S_n\rho(y)|\leq K_1 d_n(x,y).
	\end{equation}
	\begin{remark}
		Observe that whenever $\rho$ is Lipschitz continuous, condition \eqref{eq:important-condition} is satisfied.
	\end{remark}
	Now we establish a relation between the measure-theoretic metric mean dimension  of a system and its suspension flow. 
	\begin{theorem}\label{thm:byproduct}
		Given $\mu\in\mathcal M_f(X)$ we have that
		\begin{equation}\label{eq: metric entropy suspension}
			H(\mu_\rho)=\frac{H(\mu)}{\int\rho\ d\mu}.
		\end{equation}
	\end{theorem}
	\begin{proof}
		We start proving that this result holds for $\rho$ satisfying $\rho\ge 1$. So, assume $\rho \geq 1$ and consider 
		\[A=\{(x,t)\in X_\rho:0\le t\le1\}\subset X_\rho.\]
		It is easy to see that $\mu_\rho(A)>0$. Let $\Psi_A$ be the first return map to the set $A$. Then 
		\[\Psi_A(x,t)=(f(x),t-\rho(x)\pmod 1)\text{ for any }(x,t)\in A.\]
		First of all we notice that    $(\mu_\rho)_A=\mu\times\text{Leb}$,   where by $\text{Leb}$ we mean the Lebesgue measure on $[0,1]$. In fact, given $B\in\mathcal B(X)$ and $I\in\mathcal B([0,1])$, we have 
		\[(\mu_\rho)_A(B\times I)=\frac{\int_X\int_0^{\rho(x)}1_B(x)1_I(t)\ dt\ d\mu}{\mu_\rho(A)\int\rho \ d\mu}=\frac{\mu(B)\times \text{Leb}(I)}{\mu_\rho(A)\int\rho \ d\mu}.\]
		Thus, as 
		\[\mu_\rho(A)=\frac{\int_X\int_{0}^{\rho(x)}1_X1_{[0,1]}dtd\mu}{\int \rho d\mu}=\frac{1}{\int\rho \ d\mu},\]
		we obtain that $(\mu_\rho)_A(B\times I)=(\mu\times \text{Leb})(B\times I)$, for any $B\in\mathcal B(X)$ and $I\in\mathcal B([0,1])$.
		
		Observing that $\Psi_A$ is a skew product map generated by $f$ and the family of continuous maps $S={g_x:[0,1]\to[0,1]}$, where $g_x:t\in [0,1]\mapsto t-\rho(x) \pmod 1\in [0,1]$, for any $x\in X$, by \cite[Proposition 1.3 of Chapter 6]{Petersen} 
		we have that for any measurable product partition $\xi=\alpha\times \beta$ of $X\times [0,1]$ and any $\nu\in \mathcal M_f(X)$
		\[h_{\nu\times \text{Leb}}(\Psi_A,\xi)\geq h_{\nu}(f,\alpha)+h_{\nu\times\text{Leb}}(\Psi_A|S,\beta),\]
		where $h_{\nu\times\text{Leb}}(\Psi_A|S,\beta)$ denotes the fiber entropy of the family $S$ with respect to $\nu\times\text{Leb}$. Once $h_{\nu\times\text{Leb}}(\Psi_A|S,\beta)=0$, and  for any family $(\mu_\varepsilon)_\varepsilon\in\mathcal M(\mu)$ we have $(\mu\times\text{Leb})_\varepsilon\in\mathcal M((\mu_\rho)_A)$, 
		\begin{align*}
			\limsup_{\varepsilon\to0} \frac{\inf_{|\xi|<\varepsilon}h_{(\mu_{\rho})_A}(\Psi_A,\xi)}{-\log\varepsilon}\geq \limsup_{\varepsilon\to0} \frac{ \inf_{|\alpha|<\varepsilon} h_{\mu}(f,\alpha)}{-\log\varepsilon}.
		\end{align*}
		So, $H((\mu_\rho)_A)\geq H(\mu)$. 
		
		For the converse inequality we will make use of the notion of Katok's metric entropy. Let $\delta>0$ and consider  $(\bar\mu_\varepsilon)_\varepsilon\in \mathcal M((\mu_\rho)_A)$. For each $\varepsilon>0$, take  $F\subset X$ so that 
		\[
		\nu_\varepsilon(F)=(\pi_1)_\ast(\bar\mu_\varepsilon)(F)=\bar\mu_\varepsilon(\pi_1^{-1}(F))>1-\delta
		\]
		where $\pi_1$ is the projection map to $X$.
		
		Let $M:=\max_{x\in X}|\rho(x)|$. Note that for any $(x,t),(y,s)\in A$, $(x,t\rho(x))$ and $(y,s\rho(y))$ can be connected by the vertical segment $((x,t\rho(x)),(x,s\rho(y))$ and the horizontal segment $((x,s\rho(y)),(y,s\rho(y))$ (see the construction of Bowen-Walters metric above). Thus, by  \eqref{eq:BW-metric},
		\begin{align*}
			\tilde{d}((x,t),(y,s))\le &|t\rho(x)-s\rho(y)|+\max\{d(x,y),d(f(x),f(y))\}\\
			\le& |t-s|M+K_1d(x,y)+\max\{d(x,y),d(f(x),f(y))\},
		\end{align*}
		where $K_1>0$ is the constant in \eqref{eq:important-condition}. Thus, using \eqref{eq:important-condition} again, we obtain that  for each $n\in\mathbb N$, 
		\begin{align}
			\label{eq:qwe}
			\tilde{d}_n((x,t),(y,s))\,=&\, \max_{0\le i\le n-1}\left\{ \tilde{d}((f^i(x),t-S_i \rho(x)),(f^i(y),s-S_i \rho(y))\Biggr\} \notag\right.\\
			\le &\max\{M,K_1,1\}\cdot\max_{0\le i\le n-1}\left\{|t-s|+\sum_{j=0}^{i-1}|\rho(f^j(x))-\rho(f^j(y))|+\notag\right.\\
			& \,d(f^i(x),f^i(y))+\max\left\{d(f^i(x),f^i(y)),d(f^{i+1}(x),f^{i+1}(y))\right\}\notag\Biggr\}\\
			\le &\, 3\,\max\{M,K_1,1\}\cdot\max_{0\le i\le n-1}\Biggl\{|t-s|+K_1\max_{0\le j\le i-1}d(f^j(x),f^j(y))+\notag\\
			&\, d(f^i(x),f^i(y))+\max\{d(f^i(x),f^i(y)),d(f^{i+1}(x),f^{i+1}(y))\}\Biggr\}\notag\\
			\le &\, 3\max\{MK_1,K_1^2,K_1,1\}\cdot\max_{0\le i\le n}\{|t-s|+d(f^i(x),f^i(y))\}.
		\end{align}
		Denote $C=3\max\{M,MK_1,K_1^2,K_1,1\}$ and take $E\subset F$ a $(n,\varepsilon/2(1+C))$-spanning set for $n\in\mathbb N$. Now we consider 
		\[R=\left\{\ell\varepsilon/2(1+C): 0\leq \ell\leq\left\lfloor \frac{2(1+C)}{\varepsilon}\right\rfloor\right\}\]  
		and take 
		\[\bar E:=\{(x,t):x\in E \text{ and }t\in R\}\subset A.\]
		Now we notice that given $(y,s)\in A$, as $E$ is $(n,\varepsilon/2(1+C))$-spanning, there exists $x\in E$ so that $d_n(x,y)<\varepsilon/2(1+C)$. Choose $t\in R$ so that $|t-s|<\varepsilon/2(1+C)$. By \eqref{eq:qwe}, we have that 
		\begin{align*}
			\tilde{d}_{n-1}((x,t),(y,s))
			\le C\cdot(\varepsilon/(2(1+C))+\varepsilon/(2(1+C)))<\epsilon.
		\end{align*}
		So, $\bar E$ is a $(n-1,\varepsilon)$-spanning set for $\Psi_A$, contained in $\pi^{-1}_1(F)$ (with $\bar\mu_\varepsilon(\pi_1^{-1}(F))>1-\delta$) with cardinality less than $\left(\left\lfloor \frac{2(1+C)}{\varepsilon}\right\rfloor+1\right)\cdot \# E$. In particular,
		\[
		b_{\bar\mu_\varepsilon}(\varepsilon,\delta,n-1)\leq \left(\left\lfloor \frac{2(1+C)}{\varepsilon}\right\rfloor+1\right) \cdot b_{\nu_\varepsilon}(\varepsilon/2(1+C),\delta,n),
		\]
		(see Remark \ref{rmk_important} to recall the definition of $b_{\bar\mu_\varepsilon}(\cdot,\cdot,\cdot)$) which implies that 
		\begin{equation}\label{ineq-between-metric-entropies}
			h_{\bar\mu_\varepsilon}(\Psi_A,\varepsilon,\delta)\leq h_{\nu_\varepsilon}(f,\varepsilon/2(1+C),\delta).
		\end{equation}
		As $(\nu_\varepsilon)_\varepsilon\in\mathcal M(\mu)$, by the relation between $H(\mu)$ and $H^K_{\delta}(\mu)$ obtained in Section \ref{sec: conclusion of proof}, we conclude that $H((\mu_\rho)_A)\leq H(\mu)$.

		From now on our goal is to prove that $H((\mu_\rho)_A)=\frac{H(\mu_\rho)}{\mu_\rho(A)}$. Before we begin with this part of the proof we observe that the argument presented below works for $B\subset X_\rho$ for which $\mu_\rho(\partial B)=0$.
		Now we notice that, given  a partition $\hat{\xi}$ of $A$, we can construct a partition $\xi$ of $X$ by adjoining $A^c$.
		By the entropy formula of an induced transformation \cite[Chapter 6]{Petersen}, we have 
		\begin{equation}\label{eq:eqqe}
			h_{{\bar\nu}_A}(\Psi_A,\hat{\xi})=\frac{1}{\bar\nu(A)}h_{\bar\nu}(\Psi,\xi),
		\end{equation}
		for any $\bar\nu\in\mathcal M_\Psi(X_\rho)$ and $\xi$ of the previous type. Note that if a partition $\hat{\xi}$ of $A$ with $\mathrm{diam}(\hat{\xi})<\varepsilon$ for some $\varepsilon>0$, then there exists $n>0$ such that $\mathrm{diam}(\vee_{i=0}^{n-1}g_1^{-i}{\xi})<\varepsilon$. For any measurable partition $\xi$ of $X$ with $\mathrm{diam}(\hat{\xi})<\varepsilon$, let $\hat{\eta}=\{A\cap B:B\in\xi\}$ be a partition of $A$. By the discussion above, there exists $n=n(\eta)>0$ such that $\mathrm{diam}(\vee_{i=0}^{n-1}g_1^{-i}{\eta})<\varepsilon$. Then 
		\[h_{\mu}(\Psi,\xi)\ge h_{\mu}(\Psi,\eta)=h_{\mu}(\Psi,\vee_{i=0}^{n-1}g_1^{-i}{\eta}),\]
		where the last equation can be found in \cite[Proposition 9.3.2]{BS2002}. Therefore, we have that 
		\begin{equation}\label{eq:11.06}
			H(\mu_\rho)=\sup_{(\bar\mu_\varepsilon)_\varepsilon\in\mathcal M(\mu_\rho)}\limsup_{\varepsilon\to0}\frac{\inf_{|\hat{\xi}\cup\{A^c\}|<\varepsilon}h_{\bar\mu_\varepsilon}(\Psi,\xi)}{|\log\varepsilon|},
		\end{equation}
		where the infimum is taken over all   finite measurable partition $\hat \xi$ of  $A$.

		Now we that given $(\bar\mu_\varepsilon)_\varepsilon\in\mathcal M(\mu_\rho)$, as
		$\partial A=\{1\}\times X$, we have that $\mu_\rho(\partial A)=0$ and   $((\bar\mu_\varepsilon)_A)_\varepsilon\in\mathcal{M}((\mu_\rho)_A)$. Therefore,
		\begin{align*}       \frac{H(\mu_\rho)}{\mu_\rho(A)} &=\sup_{(\bar\mu_\varepsilon)_\varepsilon\in\mathcal M(\mu_\rho)}\limsup_{\varepsilon\to0}\frac{\inf_{|\xi|<\varepsilon}h_{\bar\mu_\varepsilon}(\Psi,\xi)}{\bar\mu_\varepsilon(A)|\log\varepsilon|}\\   &\le\sup_{(\bar\mu_\varepsilon)_\varepsilon\in\mathcal M(\mu_\rho)}\limsup_{\varepsilon\to0}\frac{\inf_{|\xi|<\varepsilon}h_{(\bar\mu_\varepsilon)_A}(\Psi_A,\xi)}{|\log\varepsilon|}\\
			&\leq H((\mu_\rho)_A).
		\end{align*}
		
		For the converse inequality we observe that given   $(\bar\nu_\varepsilon)_\varepsilon\in\mathcal{M}((\mu_\rho)_A)$, the sequence $(\bar\mu_\varepsilon)_\varepsilon$ defined as 
		\begin{align*}
			\bar\mu_\varepsilon(\cdot)=\mu_\rho(A)\bar\nu_\varepsilon(A\cap\cdot)+\mu_\rho(A^c)\bar\nu_\varepsilon(A^c\cap\cdot)
		\end{align*}
		satisfies the following:
		\begin{enumerate}
			\item[i.] $(\bar\mu_\varepsilon)_A=\bar\nu_\varepsilon$;
			\item[ii.] $(\bar\mu_\varepsilon)_\varepsilon\in\mathcal M(\mu_\rho)$.
		\end{enumerate}
		
		Applying  \eqref{eq:11.06} and \eqref{eq:eqqe}, we obtain $\displaystyle\frac{H(\mu_\rho)}{\mu_\rho(A)}\geq H((\mu_\rho)_A)$ and hence
		\begin{align}
			\label{eq:eqeqotaiio}
			\displaystyle\frac{H(\mu_\rho)}{\mu_\rho(A)}= H((\mu_\rho)_A)
		\end{align}

		We now consider the case of a general roof function.   Given $\rho \colon X\to (0,\infty)$, let us consider $\rho+1$. Then 
		\[H(\mu_{\rho+1})=\frac{H(\mu)}{\int \rho\ d\mu+1}.\]
		Since $X_\rho\subset X_{\rho+1}$, $\mu_{\rho+1}(\partial X_\rho)=\mu_{\rho+1}(\{(x,\rho(x)):x\in X\})$ and $\rho $ is a continuous function, by \eqref{eq:eqeqotaiio} 
		\[H((\mu_{\rho+1})_{X_\rho})=\frac{H(\mu_{\rho+1})}{\mu_{\rho+1}(X_\rho)}.\]
		Since $(\mu_{\rho+1})_{X_\rho}=\frac{\mu_{\rho+1}(\cdot \cap X_{\rho})}{\mu(X_{\rho)}}$, we deduce that
		\[(\mu_{\rho+1})_{X_\rho}=\mu_\rho\]
		and 
		\[\mu_{\rho+1}(X_\rho)=\frac{\mu(X_\rho)}{\mu(X_{\rho+1})}=\frac{\int \rho\ d\mu}{\int \rho+1\ d\mu}.\]
		Thus, 
		\[H(\mu_\rho)=\frac{\int \rho +1d\mu}{\int \rho d\mu}\cdot \frac{1}{\int \rho+1 d\mu}H(\mu)=\frac{1}{\int \rho d\mu}\cdot H(\mu).\]
		The proof is completed.
	\end{proof}
	
	\begin{remark}\label{rem:Aformula}
		We observe that, as a consequence of the proof of the previous theorem, we get that for any continuous map $h:X\to X$ acting on a compact metric space $(X,d)$,
		\[H(\nu_B)=\frac{H(\nu)}{\nu(B)},\]
		for any $\nu\in\mathcal M_h(X)$ and $\nu$-measurable set $B$ with $\nu(B)>0$ and  $\nu(\partial B)=0$, where $\nu_B$ is the restriction of $\nu$ to $B$.
		
		Moreover, our results guarantee that given $\alpha\in[0,\infty)$, there exists a flow, acting on a compact metric space, with metric mean dimension greater or equal to $\alpha$. 
	\end{remark}
	
	\begin{theorem}
		Let $(X,d)$ be a compact metric space and $f\colon X\to X$ be a homeomorphism with the specification property.  Let $\rho\colon X\to(0,\infty)$ be a continuous function satisfying \eqref{eq:important-condition} and $(X_\rho,\Psi)$ be the corresponding suspension flow over $X$. Suppose $X_\rho$ is endowed with the Bowen-Walters metric $\tilde d$ induced by $d$.   
		If $\Phi\colon  X_\rho\to \mathbb R $ is a continuous function then
		\begin{align*}
			\mathrm{\overline{mdim}_M^B}\,(X_\rho(\Phi,\alpha),\Psi,\tilde d)\geq  \sup\left\{H(\mu_\rho):\mu_{\rho}\in \mathcal M_\Psi(X_\rho)\text{ and }\int_{X_\rho}\Phi\ d\mu_\rho=\alpha\right\}.
		\end{align*}
	\end{theorem}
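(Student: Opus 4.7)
The plan is to combine the two preceding theorems of this section with the Abramov-type identity \eqref{eq: metric entropy suspension} established just above. First, set $\varphi(x) := \int_0^{\rho(x)}\Phi(x,t)\, dt$ and $Z := X(\varphi,\rho,\alpha)$. Using the identity $\lim_{T\to\infty}\tfrac{1}{T}\int_0^T \Phi(g_t(x),t)\,dt = \lim_{n\to\infty}\tfrac{S_n\varphi(x)}{S_n\rho(x)}$ (valid whenever either side converges), I identify $X_\rho(\Phi,\alpha) = Z_\rho$. Note also that by the very definition of $\mu_\rho$, the condition $\int\Phi\,d\mu_\rho = \alpha$ is equivalent to $\int\varphi\,d\mu = \alpha\int\rho\,d\mu$, so the constraints match under the bijection $\mu\leftrightarrow\mu_\rho$ between $\mathcal M_f(X)$ and $\mathcal M_\Psi(X_\rho)$.

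Next, fix an arbitrary $\mu_\rho$ with $\int\Phi\,d\mu_\rho = \alpha$ (otherwise the right-hand side is vacuous). Then $\mu$ satisfies the ratio constraint defining $Z$; assuming $Z\neq\emptyset$ (see below), Theorem \ref{thm:10101010} applied with potential $\psi = -t\rho$ gives
\[
\mathrm{\overline{mdim}_M}(Z,f,d,-t\rho) \;\geq\; H(\mu) - t\int\rho\,d\mu \;=\; \Bigl(\textstyle\int\rho\,d\mu\Bigr)\bigl(H(\mu_\rho) - t\bigr),
\]
where the last equality invokes \eqref{eq: metric entropy suspension}. At $t = H(\mu_\rho)$ the right-hand side is zero, and since the map $t\mapsto\mathrm{\overline{mdim}_M}(Z,f,d,-t\rho)$ is continuous and strictly decreasing with a unique zero $\beta$, this forces $\beta \geq H(\mu_\rho)$.

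Finally, Theorem \ref{thm2020202} applied to the set $Z$ yields $\mathrm{\overline{mdim}_M}(Z_\rho,\Psi,\tilde d) \geq \beta$, and since $Z_\rho = X_\rho(\Phi,\alpha)$, combining the two bounds and taking the supremum over admissible $\mu_\rho$ produces the desired inequality.

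The main technical point to verify is the nonemptiness of $Z$ whenever an admissible $\mu_\rho$ exists. This should follow by ergodic decomposition: one decomposes $\mu$, observes that under specification the range $\{(\int\varphi\,d\nu)/(\int\rho\,d\nu) : \nu\in\mathcal M_f^{erg}(X)\}$ is a full interval (analogous to \cite[Lemma 2.5]{Tho}) containing $\alpha$, selects an ergodic $\nu$ realizing the ratio $\alpha$, and then applies Birkhoff's ergodic theorem to a $\nu$-typical point to produce an element of $Z$. Once $Z\neq\emptyset$ is secured, every remaining step is a direct application of the machinery already developed.
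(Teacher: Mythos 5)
Your proposal is correct and takes essentially the same route as the paper: identify $X_\rho(\Phi,\alpha)$ with $Z_\rho$ for $Z=X(\varphi,\rho,\alpha)$, get $\mathrm{\overline{mdim}_M}\,(Z_\rho,\Psi,\tilde d)\geq\beta$ from Theorem \ref{thm2020202}, and use Theorem \ref{thm:10101010} together with \eqref{eq: metric entropy suspension} at the root $\beta$ to conclude $\beta\geq H(\mu_\rho)$ for every admissible $\mu_\rho$ (your per-measure monotonicity argument is just the paper's evaluation of the supremum at $t=\beta$). Your extra remark on nonemptiness of $Z$ is a point the paper leaves implicit; it is needed to invoke Theorem \ref{thm:10101010}, but the cleanest justification is not the ergodic-range claim you sketch (\cite[Lemma 2.5]{Tho} concerns invariant, not ergodic, measures) but rather the observation that $\rho\geq\inf\rho>0$ gives $K_0(\varphi-\alpha\rho)\subseteq Z$, and an admissible $\mu$ satisfies $\int(\varphi-\alpha\rho)\,d\mu=0$, so specification and that lemma yield $Z\neq\emptyset$.
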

	\begin{proof}
		Let $\beta >0$ be the unique solution of the equation $\mathrm{\overline{mdim}_M^B}\,(X_\rho(\Phi,\alpha),f, d,-t\rho)=0$. Then, by Theorem \ref{thm2020202}, we have that $\mathrm{\overline{mdim}_M^B}\,(X_\rho(\Phi,\alpha),\Psi,\tilde d)\geq \beta$. Now, letting $\varphi$ be the map associated to $\Phi$ as in the beginning of the section, it follows by Theorem \ref{thm:10101010}  that
		\[
		\sup\left\{H(\mu)-\beta\int_X\rho \ d\mu : \mu\in\mathcal M_f(X)\text{ and }\frac{\int_{X}\varphi \ d\mu}{\int_X \rho \ d\mu}=\alpha\right\}=0.
		\]
		Consequently, if $\mu\in\mathcal M_f(X)$ satisfies $\int\varphi\ d\mu\slash \int \rho\ d\mu=\alpha$, then $\beta\geq H(\mu)\slash \int\rho\ d\mu$. Therefore, from \eqref{eq: metric entropy suspension} it follows that
		\begin{align*}
			\beta&\geq\sup\left\{\frac{H(\mu)}{\int\rho\ d\mu}: \mu\in\mathcal M_f(X)\text{ and }\frac{\int_{X}\varphi \ d\mu}{\int_X \rho \ d\mu}=\alpha\right\}\\
			&=\sup\{ H(\mu_\rho ):\mu\in\mathcal{M}_\Psi(X_\rho)\text{ and }\int \Phi\ d\mu=\alpha\}.
		\end{align*}
		which completes the proof of the theorem. 
	\end{proof}
	
	\begin{remark}
		In general, one can not expect to get an equality in the previous theorem. In fact,  as by \cite{Gu} there exists a  minimal topological dynamical system $f:X\to X$ with compatible metric
		and a roof function $\rho : X \to (0,\infty)$ such that $\mathrm{\overline{mdim}_M^B}\,(X,f,  d)=0$ but $\mathrm{\overline{mdim}_M^B}\,(X_\rho,\Psi_1, \tilde d)>0$, and by \eqref{eq: metric entropy suspension}
		\begin{align*}
			0=  \mathrm{\overline{mdim}_M^B}\,(X,f, d)&=\sup \left\{H(\mu):\mu\in \mathcal M_f(X)\right\}
			\\
			&=\sup\left\{H(\mu_\rho):\mu_{\rho}\in \mathcal M_\Psi(X_\rho)\right\}.
		\end{align*}
		Thus,
		\[\sup\left\{H(\mu_\rho):\mu_{\rho}\in \mathcal M_\Psi(X_\rho)\right\}<\mathrm{\overline{mdim}_M^B}\,(X_\rho
		,\Psi,\tilde d).\]
	\end{remark}

	
	\medskip{\bf Acknowledgements.}
	The authors express gratitude to Prof. Gutman for providing many meaningful suggestions.

	\section{Declarations}
	\textbf{Ethical Approval:} not applicable.
	
	\textbf{Funding:} L.~Backes was partially supported by a CNPq-Brazil PQ fellowship under Grant No. 307633/2021-7. 
	
	\textbf{Data Availability:} No data sets were generated or analysed during the current study.
	
	\textbf{Declarations:} Conflict of interest: No potential conflict of interest was reported by the authors.
	\bibliographystyle{acm}

\end{document}